\theoremstyle{definition}
\newtheorem{defin}{Definition}[section]
\newtheorem{oss}[defin]{Remark}
\newtheorem{prop}[defin]{Proposition}
\newtheorem{example}[defin]{Example}
\theoremstyle{plain}
\newtheorem{teo}[defin]{Theorem}
\begin{document}
\title{A geometrically bounding hyperbolic link complement}
\author{L. Slavich}
\date{January 2013}

\begin{center}
{\large \textbf{A GEOMETRICALLY BOUNDING\\ HYPERBOLIC LINK COMPLEMENT}

\vspace{0.4 in}}

\textnormal{LEONE SLAVICH}
\end{center}

\noindent ABSTRACT: A finite-volume hyperbolic $3$-manifold  {\em geometrically} bounds if it is the geodesic boundary of a finite-volume hyperbolic $4$-manifold. We construct here an example of non-compact, finite-volume hyperbolic $3$-manifold that geometrically bounds. The $3$-manifold is the complement of a link with eight components, and its volume is roughly equal to $29.311$.

\section{Introduction}
The problem of understanding which hyperbolic $3$-manifolds bound geometrically hyperbolic $4$-manifolds dates back to work of Long and Reid \cite{longreid}, \cite{longreid1}. This problem is related to physics, in particular with the theory of hyperbolic gravitational instantons, as shown in \cite{gibbons}, \cite{ratcliffetschanz} and \cite{ratcliffetschanz3}.
The first example of closed geometrically bounding $3$-manifold was constructed by Ratcliffe and Tschantz in \cite{ratcliffetschanz}. More recent progress in this problem is due to Kolpakov, Martelli and Tschantz \cite{martelli4}. The setup is the following:

\begin{defin}
Let $\mathcal{X}$ be a complete finite-volume hyperbolic $4$-manifold with one totally geodesic boundary component $M$. We say that $M$ {\em geometrically bounds} the manifold $\mathcal{X}$.
\end{defin}
This definition naturally extends the definition of Long and Reid \cite{longreid}, which applies to compact manifolds, to the non-compact case. In this setup, some of the flat cusp sections of $\mathcal{X}$ will have totally geodesic boundary components corresponding to cusp sections of the bounding manifold $M$. Note that the ambient manifold $\mathcal{X}$ may have closed cusp sections which are disjoint from the geodesic boundary $M$.

Even though it is known that every closed $3$-manifold is the boundary of a compact $4$-manifold, Long and Reid \cite{longreid} show that there are closed hyperbolic $3$-manifolds that do not bound geometrically compact hyperbolic $4$-manifolds, therefore the property of being a geometric boundary is non-trivial.
We prove here the following:
\begin{teo}\label{teoremone}
The exterior of the link in Figure \ref{bounding} is hyperbolic. It  is tessellated by eight regular ideal hyperbolic octahedra, and geometrically bounds a hyperbolic $4$-manifold $\mathcal{X}$ which is tessellated by two regular ideal hyperbolic $24$-cells.
\begin{figure}[ht!]
\centering
\makebox[\textwidth][c]{
\includegraphics[width=\textwidth]{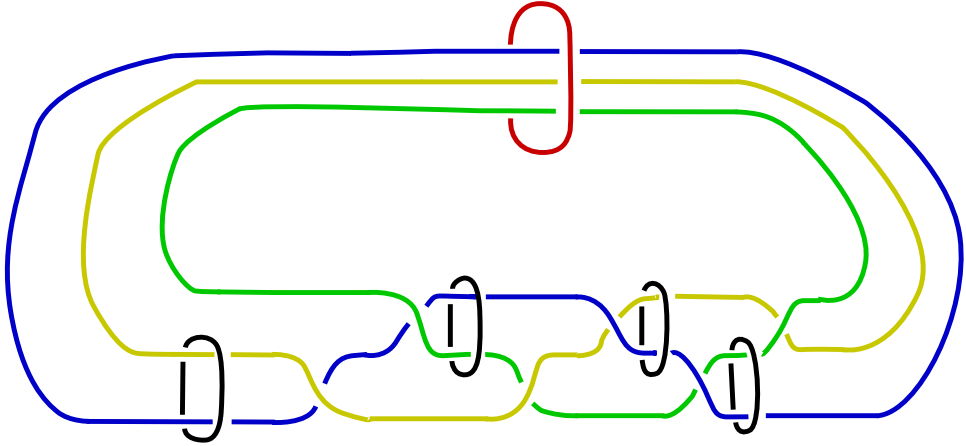}}
\caption{A geometrically bounding hyperbolic link complement in $S^3$.}\label{bounding}
\end{figure}
\end{teo}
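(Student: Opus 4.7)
My approach would be to construct the bounding $4$-manifold $\mathcal{X}$ directly from two copies of the regular ideal hyperbolic $24$-cell $\mathcal{C}\subset\mathbb{H}^4$, glued along matched octahedral facets, and then to recognize the totally geodesic boundary as the link complement in Figure~\ref{bounding}. Recall that $\mathcal{C}$ has $24$ ideal vertices (with cubical Euclidean horospherical cross-sections), $96$ triangular $2$-faces each carrying dihedral angle $\pi/2$, and $24$ regular ideal octahedral $3$-facets; the dihedral angle is $\pi/2$ because $\mathcal{C}$ tessellates $\mathbb{H}^4$ as the regular honeycomb $\{3,4,3,4\}$, with four copies meeting at every ridge.

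On two disjoint copies $\mathcal{C}_1,\mathcal{C}_2$ I would prescribe an orientation-reversing face-pairing that identifies $40$ of the $48$ octahedral facets in $20$ pairs, leaving $8$ unpaired octahedra to form the boundary $M$. The identifications must be chosen so that the Poincar\'e polytope conditions hold: around each interior triangular ridge, exactly four $2$-faces are cycled, giving total angle $4\cdot\pi/2=2\pi$; around each boundary triangular ridge, only two $2$-faces are cycled, contributing $\pi$, which is precisely what is needed for $M$ to be totally geodesic; the induced edge cycles must close up to complete hyperbolic edges; and the vertex-link identifications must produce flat cusp cross-sections (closed where cusps of $\mathcal{X}$ are disjoint from $M$, and with totally geodesic boundary where they meet cusps of $M$). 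Once these conditions are checked, Poincar\'e's theorem certifies that the quotient $\mathcal{X}=(\mathcal{C}_1\sqcup\mathcal{C}_2)/\!\sim$ is a complete finite-volume hyperbolic $4$-manifold with totally geodesic boundary $M$, tessellated by the $8$ unpaired regular ideal octahedra and so of volume $8\,v_{\mathrm{oct}}\approx 29.311$.

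The remaining task is to identify $M$ with the exterior of the link in the figure. Since $M$ comes equipped with an explicit octahedral tessellation and face-pairing, I can read off its $8$ cusps (matching the number of link components), compute invariants such as first homology and the cusp shapes, and—most efficiently—input the triangulation into SnapPy to exhibit a combinatorial isometry with a known ideal octahedral decomposition of the depicted link complement in $S^3$. Hyperbolicity of the link exterior then follows for free.

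The main obstacle is producing the face-pairing in the second paragraph: choosing identifications that simultaneously satisfy all the ridge, edge and cusp cycle conditions \emph{and} deliver exactly the prescribed $3$-manifold as boundary is highly constrained, since ridge, edge and cusp cycles are coupled. The cusp analysis is especially delicate, because the flat cross-sections of $\mathcal{X}$ split into closed pieces and pieces with totally geodesic boundary, and the latter must match the cusp tori of $M$ in precisely the right way so that the eight boundary octahedra assemble into the link complement of Figure~\ref{bounding} rather than some other $8$-octahedra $3$-manifold.
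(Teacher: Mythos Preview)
Your high-level strategy coincides with the paper's: take two ideal $24$-cells, pair $40$ of the $48$ facets, and realize the $8$ unpaired octahedra as a totally geodesic boundary. Where you diverge is in \emph{how} the face-pairing is organized and in \emph{how} the boundary is identified with the link complement.

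The paper does not search among all pairings of $40$ facets. It exploits the natural $3$-colouring of the octahedral facets of $\mathcal{C}$ into red, green and blue. First it doubles $\mathcal{C}$ along the green facets to obtain a ``mirrored $24$-cell'' $\mathcal{S}$ whose boundary $3$-strata are already Minsky blocks (doubled ideal octahedra) meeting orthogonally along thrice-punctured spheres; this makes the ridge-angle bookkeeping automatic. Then it pairs the blue $3$-strata by two simple coordinate transpositions $F,G$, which immediately yields a manifold $\mathcal{R}$ with \emph{five} geodesic boundary components, four small isometric ones and one large one $M$. A further map $K$ pairs the four small components. The upshot is that the verification of the Poincar\'e conditions reduces to reading off cusp diagrams (Figures~\ref{sezionecuspide}--\ref{cuspidegrande}) rather than checking arbitrary ridge and edge cycles; and, crucially, $M$ is delivered together with a \emph{triangulation} in the sense of Section~\ref{triang} (four tetrahedra, Figure~\ref{bordogrande}), via the correspondence between Minsky blocks and tetrahedra. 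Your plan anticipates that finding admissible pairings is the hard step; the colouring plus the triangulation/Minsky-block formalism is exactly the device that removes this difficulty.

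For the identification of $M$ with the link exterior, you propose feeding the octahedral gluing into SnapPy and matching invariants or finding a combinatorial isometry. The paper instead proceeds by hand: the triangulation picture gives $M$ as the double of a relative handlebody, hence a link complement in a connected sum of copies of $S^2\times S^1$, presented as a partially framed link. A sequence of Kirby moves (handle-slides, Rolfsen twists along unframed unknotted components, blow-downs) eliminates the framed components one by one and produces the link of Figure~\ref{bounding} in $S^3$. Your SnapPy route is legitimate and quicker, but it presupposes already knowing the target link; the paper's Kirby-calculus argument is constructive and actually \emph{derives} the link diagram from the combinatorics, which is what makes Theorem~\ref{teoremone} a statement about a specific link rather than an existence statement verified a posteriori.
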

The hyperbolic volume of the geometrically bounding $3$-manifold described in Theorem \ref{teoremone} is approximately equal to $29.311$, while the volume of the ambient $4$-manifold $\mathcal{X}$ is equal to $8\pi^2/3$.

The existence of non-compact, complete, finite-volume hyperbolic $4$-manifolds with totally geodesic boundary has been proven by Gromov and Piatetski-Shapiro in \cite{GPS}.
Explicit examples of non-compact, complete, finite-volume hyperbolic $4$-manifolds with connected, totally geodesic boundary are described by Ratcliffe and Tschantz in \cite{ratcliffetschanz}. Moreover, the examples considered there have minimal volume $v_m=4\pi^2/3$. However, it is not clear to the author of the present paper whether or not the geometrically bounding $3$-manifolds are homeomorphic to the exteriors of some link in $S^3$. Notice that all the geometrically bounding manifolds constructed in \cite{ratcliffetschanz} have volume approximaltely equal to $14,655$, \emph{i.e.}\ half the volume of the example of Theorem \ref{teoremone}. 

In Section \ref{ambiente} we will build the ambient manifold $\mathcal{X}$ by taking two copies of the regular ideal hyperbolic $24$-cell, and pairing $40$ of their $48$ total octahedral facets. The remaining $8$ facets will be glued together along their boundaries to produce the bounding manifold $M$. In Section \ref{presentazione} we will use Kirby calculus to build a presentation of $M$ as the exterior of the link of Figure \ref{bounding}.

\medskip

\textbf{Acknowledgements} The author is grateful to his advisor, Prof.\ Bruno Martelli (Universit\`a di Pisa) for the many fruitful discussions and the precious advices that have made this work possible.




\section{Hyperbolic 3-manifolds from triangulations}\label{triang}
In this section we will give a brief overview of a construction contained in \cite{martelli3}, which allows us to build a large family of hyperbolic $3$-manifolds.

Consider the regular euclidean octahedron $O$, which is the convex hull in $\mathbb{R}^3$ of the points $(\pm 1,0,0)$, $(0,\pm 1,0)$ and $(0,0, \pm1)$. 
The octahedron $O$ has eight $2$-dimensional triangular faces, each lying in an affine plane of equation $\pm x_1 \pm x_2 \pm x_3=1$. Notice that these faces have a red/blue checkerboard coloring, so that each edge of $O$ is adjacent to triangles of different colors.

\begin{defin}\label{ottaedroideale}
The octahedron $O$ has a realization as a hyperbolic polytope which is obtained in the following way:
\begin{enumerate}
\item Normalize the the coordinates of the vertices $v_i$, $i=1,\dots,6$ of $O$ so that they lie in the unit sphere $S^{2}\subset \mathbb{R}^{3}$.
\item Interpret $S^{2}$ as the boundary $\partial\mathbb{H}^3$ of hyperbolic space.
\item Consider the convex envelope in $\mathbb{H}^3$ of the points $v_1,\dots,v_6$.
\end{enumerate} 
The convex subset of $\mathbb{H}^3$ that we obtain is the {\em regular ideal hyperbolic} octahedron $\mathcal{O}$. 
\end{defin}
The eight vertices of $O$ correspond to cusps of $\mathcal{O}$, and the checkerboard coloring of the faces of $O$ induces one on $\mathcal{O}$.

\begin{defin}\label{minsky}
The {\em Minsky block} $B$ is the orientable cusped hyperbolic $3$-manifold obtained from two copies $\mathcal{O}_1$ and $\mathcal{O}_2$ of the regular ideal hyperbolic octahedron $\mathcal{O}$ by gluing the {\em red}  boundary faces of $\mathcal{O}_1$ to the corresponding red boundary faces of $\mathcal{O}_2$ with the map which corresponds to the identity on the faces of $\mathcal{O}$.
\end{defin} 
Each of the four boundary components of $B$ is obtained by gluing two copies of a hyperbolic ideal triangle along the boundary via the identity map,  therefore it is a sphere with three punctures. Moreover $B$ has six cusps which all have the same shape, namely a flat annulus of length two and width one (up to homothety).

There is a combinatorial equivalence between certain strata of the octahedron $O$ and those of a regular euclidean tetrahedron $T$.
To see this, notice that it is possible to build an octahedron from a tetrahedron $T$ by truncating $T$ and enlarging the truncated regions until they become tangent at the midpoints of the edges. We can recover the checkerboard coloring, by declaring the truncation faces to be red, and those contained in the faces of $T$ to be blue. As shown in Figure \ref{tetraedrotroncato}, the following correspondences hold:
\begin{enumerate}
\item $\{\mathrm{Vertices\; of}\; T\}\leftrightarrow\{\mathrm{Red\;faces\;of}\; O\}$;
\item $\{\mathrm{Edges\; of}\; T\}\leftrightarrow\{\mathrm{Vertices\;of}\; O\}$;
\item $\{\mathrm{Faces\; of}\; T\}\leftrightarrow\{\mathrm{Blue\;faces\;of}\; O\}$.
\end{enumerate}  

\begin{figure}
\centering
\includegraphics{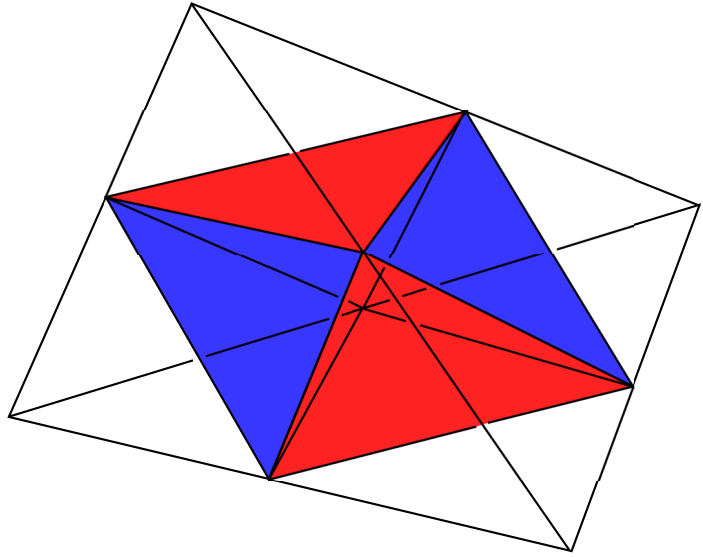}
\caption{Truncating a tetrahedron produces an octahedron.}\label{tetraedrotroncato}
\end{figure}

Since the Minsky block $B$ is the double along the red faces of a regular ideal hyperbolic octahedron $\mathcal{O}$, the correspondences above induce the following correspondences between strata of $B$ and $T$:
\begin{enumerate}
\item $\{\mathrm{Edges\; of}\; T\}\leftrightarrow \{\mathrm{Cusps\; of}\; B\} $.
\item $\{\mathrm{Faces\; of}\; T\}\leftrightarrow \{\mathrm{Boundary\; components\; of}\; B\} $;
\end{enumerate}
\begin{prop}
There is an isomorphism $\phi$ between the symmetry group of the tetrahedron $T$ and the group of orientation-preserving isometries of the block $B$.
\end{prop}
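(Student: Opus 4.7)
The plan is to construct an explicit homomorphism $\phi\colon \mathrm{Sym}(T) \to \mathrm{Isom}^{+}(B)$ via the truncation correspondence and the doubling, verify it is injective, and then match cardinalities by bounding $|\mathrm{Isom}^{+}(B)| \leq 24$ through the natural action of the isometry group on the boundary components and cusps of $B$.

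For the construction of $\phi$, I would orient $B$ by giving $\mathcal{O}_{2}$ the orientation opposite to $\mathcal{O}_{1}$ (so that $B$ is orientable), and let $\sigma$ be the involution of $B$ that swaps the two copies via the identity of $\mathcal{O}$; with this convention $\sigma$ is orientation-reversing. A symmetry $\tau \in \mathrm{Sym}(T)$ extends to a Euclidean symmetry of $\mathbb{R}^{3}$ preserving $O$ and its red/blue coloring, hence induces a symmetry $g_{\tau}$ of $\mathcal{O}$ that descends to an isometry $G_{\tau}$ of $B$ acting as $g_{\tau}$ on each copy. Set
\[
\phi(\tau)=\begin{cases} G_{\tau} & \text{if } \tau \text{ is orientation-preserving,} \\ \sigma \circ G_{\tau} & \text{if } \tau \text{ is orientation-reversing.} \end{cases}
\]
A short check gives $\sigma \circ G_{\tau}=G_{\tau}\circ\sigma$, and since $\sigma$ contributes a sign of $-1$ to orientation, a parity check over the four cases for the signs of $(\tau_{1},\tau_{2})$ shows that $\phi$ is a group homomorphism with image in $\mathrm{Isom}^{+}(B)$. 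Injectivity is then immediate: $G_{\tau}=\mathrm{id}$ forces $g_{\tau}$ to fix $\mathcal{O}$ pointwise and so $\tau=\mathrm{id}_{T}$, while $\sigma\circ G_{\tau}=\mathrm{id}$ cannot hold because $\sigma$ swaps the two copies of $\mathcal{O}$ whereas $G_{\tau}$ preserves them. This already gives $|\mathrm{Isom}^{+}(B)| \geq 24$.

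For the matching upper bound, I would use the combinatorial correspondences recorded in the excerpt: the four totally geodesic boundary components of $B$ are in bijection with the faces of $T$, the six cusps are in bijection with the edges, and the incidences are preserved. Any $f \in \mathrm{Isom}^{+}(B)$ therefore induces a well-defined symmetry $\psi(f) \in \mathrm{Sym}(T)$ by its action on boundary components and cusps. The key step is that $\psi$ is injective: if $\psi(f)$ is trivial then $f$ preserves each boundary component and each of its three cusps, so restricted to each thrice-punctured sphere $f$ is an orientation-preserving isometry fixing three cusps, hence trivial (the orientation-preserving isometry group of a thrice-punctured sphere is $S_{3}$, acting faithfully on cusps). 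Once $f$ is the identity on $\partial B$, lifting to $\mathbb{H}^{3}$ reduces the problem to showing that an orientation-preserving isometry of $\mathbb{H}^{3}$ fixing a totally geodesic plane pointwise is trivial, which is standard. Hence $|\mathrm{Isom}^{+}(B)| \leq 24$ and $\phi$ is an isomorphism.

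The main obstacle I expect is precisely this rigidity step at the end. An alternative route would be to argue directly that the four ideal triangles along which $\mathcal{O}_{1}$ and $\mathcal{O}_{2}$ are glued form a canonical totally geodesic subsurface of $B$, so that every isometry either preserves or swaps the two copies; but such canonicality is not obvious, whereas routing through the boundary--cusp combinatorics sidesteps it at the cost of invoking the rigid isometry group of the thrice-punctured sphere together with standard hyperbolic rigidity.
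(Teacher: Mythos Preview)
Your argument is correct, and it takes a genuinely different route from the paper's. The paper constructs the map in the opposite direction: it first asserts that every isometry of $B$ preserves the tessellation into the two ideal octahedra $\mathcal{O}_1$ and $\mathcal{O}_2$, then uses the swap involution $j$ to reduce any orientation-preserving isometry to one fixing each octahedron setwise, hence a colour-preserving symmetry of $O$, hence a symmetry of $T$. The isomorphism claim is then declared ``easily seen.'' The crucial step---that the octahedral decomposition is canonical and hence preserved by every isometry---is exactly the point you flagged as non-obvious; the paper does not justify it (it follows, for instance, from Epstein--Penner, but this is not invoked).

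Your approach sidesteps this entirely: you build $\phi$ explicitly to get the lower bound $24$, and for the upper bound you use the intrinsic action of $\mathrm{Isom}^{+}(B)$ on the four totally geodesic boundary components and six cusps, together with the fact that an orientation-preserving isometry of the thrice-punctured sphere fixing all three cusps is the identity, followed by the standard rigidity that an orientation-preserving isometry of $\mathbb{H}^3$ fixing a geodesic plane pointwise is trivial. This is longer but self-contained, and it makes explicit precisely the step the paper glosses over. Either approach is fine; yours is more honest about where the content lies.
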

\begin{proof}
Every isometry of $B$ preserves its tessellation into regular ideal hyperbolic octahedra $\mathcal{O}_1$ and $\mathcal{O}_2$.
There is an orientation-reversing involution $j$ of $B$ which exchanges the two ideal octahedra $\mathcal{O}_1$ and $\mathcal{O}_2$, induced by the identity map on $\mathcal{O}$. 
Every orientation-preserving isometry of $B$ can be brought, composing it with $j$ if necessary, to fix the octahedra $\mathcal{O}_i$, $i=1,\dots,2$. This induces a symmetry of the regular octahedron $O$ which preserves the red/blue coloring of its triangular faces. Such a symmetry of $O$ is induced by a unique symmetry of the tetrahedron $T$. This construction defines the map $\phi$, which is easily seen to be an isomorphism.
\end{proof}

A consequence of the correspondences above is that a set of simplicial pairings between the facets of $n$ copies of the tetrahedron $T$ encodes a set of gluings between the boundary components of $n$ copies of the Minsky block $B$, allowing us to produce a hyperbolic $3$-manifold from the simple data of a $3$-dimensional triangulation.

\begin{defin}\label{triangulation}
A {\em $3$-dimensional triangulation} is a pair $$(\{\Delta_i\}_{i=1}^n, \{g_j\}_{j=1}^{2n})$$ where $n$ is a positive natural number, the $\Delta_i$'s are copies of the standard tetrahedron, and the $g_j$'s are a complete set of simplicial pairings between the $4n$ faces of the $\Delta_i$'s.
The triangulation is {\em orientable} if it is possible to choose an orientation for each tetrahedron $\Delta_i$ so that all pairing maps between the faces are orientation-reversing.
\end{defin}

\begin{defin}\label{varietat}
Let  $\mathcal{T}=(\{\Delta_i\}_{i=1}^n, \{g_j\}_{j=1}^{2n})$ be an orientable triangulation as in Definiton \ref{triangulation}. 
We associate to each $\Delta_i$ a copy $B_i$ of the Minsky block $B$. A face pairing $g_j$ between triangular faces $F$ and $G$ of tetrahedra $\Delta_i$ and $\Delta_j$ determines a  {\em unique} orientation-reversing isometry $\phi_j$ between the boundary components of $B_i$ and $B_j$ corresponding to $F$ and $G$, by the condition that their cusps are paired accoding to the pairing of the edges of $\mathcal{T}$. We denote the manifold obtained by pairing the boundary components of the blocks $B_1,\dots,B_n$ via the isometries $\phi_1,\dots,\phi_{2n}$ by $M_{\mathcal{T}}$.
\end{defin}

\begin{prop}
The manifold $M_{\mathcal{T}}$ constructed from a triangulation $\mathcal{T}$ as in Definition \ref{varietat} is an orientable cusped hyperbolic manifold of finite volume.
\end{prop}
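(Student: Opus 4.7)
The plan is to check in turn that $M_{\mathcal{T}}$ carries a complete hyperbolic metric, that all ends are cusps, that the total volume is finite, and that the result is orientable. Each Minsky block $B_i$ is a finite-volume hyperbolic $3$-manifold with totally geodesic boundary: the interior structure comes from identifying two regular ideal hyperbolic octahedra along their red faces, while the unpaired blue faces assemble into four boundary components, each the double of a hyperbolic ideal triangle and hence isometric to a complete hyperbolic $3$-punctured sphere. Since the finite-area complete hyperbolic structure on a $3$-punctured sphere is unique up to isometry, each simplicial pairing $g_j$ of $\mathcal{T}$ determines the isometry $\phi_j$ uniquely once the cusp-pairing condition is imposed.

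Gluing the blocks $B_i$ by the isometries $\phi_j$ between totally geodesic boundary components then produces a hyperbolic manifold, since collar neighborhoods of the two sides of each glued $3$-punctured sphere fit together as hyperbolic tubes. It remains to analyze the ends. By the correspondence between edges of $T$ and cusps of $B$, the cusps of $M_{\mathcal{T}}$ are in bijection with the edge equivalence classes of $\mathcal{T}$. For each such class $e$, the corresponding cusps of the blocks---flat annuli of length two and width one---are glued along their boundary circles by the restrictions of the $\phi_j$, which are Euclidean isometries of those circles. As one moves around the edge class $e$, these annuli form a cyclic chain that closes up into a closed flat surface of Euler characteristic zero. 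Together with orientability (see below), this forces the cusp cross-section to be a Euclidean torus, so each cusp is complete of rank two and of finite volume.

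Orientability follows from the hypothesis that $\mathcal{T}$ is orientable: we choose orientations on the $\Delta_i$ so that every $g_j$ is orientation-reversing and transfer them to the $B_i$, each block being built from two oriented regular ideal octahedra. Since the $\phi_j$ are orientation-reversing by construction, the oriented blocks glue coherently. Finite total volume is immediate, since $M_{\mathcal{T}}$ is a union of $n$ blocks, each of volume twice that of a regular ideal hyperbolic octahedron.

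The main obstacle is verifying that the cusp cross-sections are genuinely complete flat tori rather than flat surfaces with cone singularities. This reduces to checking that the flat annuli associated to a given edge class assemble without holonomy defect into a closed flat surface, which is guaranteed by the fact that the $\phi_j$ are actual isometries---not merely homeomorphisms---and that the cusp-pairing prescription in Definition \ref{varietat} exactly mirrors the combinatorics of the edge identifications of $\mathcal{T}$.
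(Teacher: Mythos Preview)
Your proof is correct and follows essentially the same route as the paper's: both arguments reduce the completeness check to showing that the flat annular cusp cross-sections of the Minsky blocks concatenate around each edge class into a closed flat torus, and both read off the volume as $2n\cdot v_{\mathcal{O}}$. The only cosmetic difference is that the paper phrases the verification as extending the hyperbolic structure around the $1$-skeleton of the octahedral decomposition (citing Thurston's notes), whereas you phrase it as gluing blocks along totally geodesic thrice-punctured spheres and then checking the ends; the underlying computation is identical.
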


\begin{proof}
The manifold $M_{\mathcal{T}}$ is clearly a noncompact manifold with empty boundary. Let us view it as the result of a gluing of regular ideal hyperbolic octahedra. We have a hyperbolic structure on the complement of the $1$-skeleton and we need to check that it extends around the edges. Following chapter $4$ of \cite{notes}, this is equivalent to showing that the horoball sections of the vertex links of the blocks $B_i$, which are flat annuli of length two and width one, glue together to give a closed flat surface. The result of the glueings is to concatenate these annuli along their boundary components, producing a flat torus. 
The volume of $M_{\mathcal{T}}$ is $2n\cdot v_{\mathcal{O}}$, where $n$ is the number of tetrahedra of the triangulation and 
$v_{\mathcal{O}}\approx 3.664$ is the volume of a regular ideal hyperbolic octahedron.
\end{proof}

\begin{oss}
The cusps of the manifold $M_{\mathcal{T}}$ are in one-to-one correspondence with the edges of the triangulation $\mathcal{T}$, and that the number of annuli which are glued together to build a cusp is equal to the valence of the corresponding edge.\end{oss}


Now we will discuss the topological structure of the hyperbolic manifolds constructed from $3$-dimensional triangulations. 
\begin{defin}\label{handlebody}
A {\em relative handlebody} $(H,\Gamma)$ is an orientable handlebody $H$ with a finite sysytem $\Gamma$ of disjoint nontrivial loops in its boundary. A {\em hyperbolic structure} on $(H,\Gamma)$ is a finite-volume complete hyperbolic structure on the manifold $H\setminus \Gamma$, such that the boundary $\partial H \setminus \Gamma$ is totally geodesic.
\end{defin}

Given an orientable $3$-dimensional triangulation $\mathcal{T}=(\{\Delta_i\}_{i=1}^n, \{g_j\}_{j=1}^{2n})$ as in Definition \ref{triangulation}, there is a canonical way to associate to $\mathcal{T}$ a relative handlebody $H_{\mathcal{T}}$. We begin by considering the support $|\mathcal{T}|$ of the triangulation, which is the topological space obtained by glueing the tetrahedra of $\mathcal{T}$ according to the face pairings, with its CW-complex structure. Then we remove a regular open neighborhood of its $1$-skeleton.

We call $H$ the resulting space, which is a handlebody since it has a handle decomposition obtained from a collection of disjoint balls (corresponding to the tetrahedra of $T$) by attaching  $1$-handles  according to the face pairings. The genus of the handlebody is $n+1$, where $n$ is the number of tetrahedra of $\mathcal{T}$.

To each edge $e$ of $\mathcal{T}$ we associate a simple closed loop in $\partial{H}$, corresponding to a simple closed curve in $|T|$ which encircles $e$. These loops form a system $\Gamma$ of curves in $\partial H$, and we define the realtive handlebody $H_T$ as the pair $(H,\Gamma)$.

With a slight abuse of notation, we denote by $H\setminus \Gamma$ the complement in $H$ of the curves of $\Gamma$.
A vital observation is that we can endow $H\setminus \Gamma$ with a hyperbolic structure. To do so, we associate to every tetrahedron of $\mathcal{T}$ a regular ideal hyperbolic octahedron $\mathcal{O}_i$ as in Figure \ref{tetraedrotroncato} and glue the blue faces of the octahedra $\mathcal{O}_i$, $i=1,\dots,n$, together according to the face pairings of $\mathcal{T}$. 

The red faces will glue together along their edges to give the totally geodesic boundary of $H\setminus \Gamma$. Each $\gamma\in \Gamma$ corresponds to an edge $e$ of $\mathcal{T}$, and to each such edge corresponds an equivalence class of ideal vertices of the octahedra. These vertices are glued together to produce the cusp associated to $\gamma$.  
The shape of the cusp associated to the edge $e$ is that of a flat annulus obtained by identyfing sides of $\{0\}\times[0,1]$ and $\{n\}\times[0,1]$ of a rectangle $[0,n]\times[0,1]$, where $n$ is the valence of $e$.

\begin{prop}\label{mirroring}
Given an orientable $3$-dimensional triangulation $\mathcal{T}$, the manifold $M_{\mathcal{T}}$ is the double of $H\setminus \Gamma$ along its boundary, where $H_{\mathcal{T}}=(H,\Gamma)$ is the hyperbolic relative handlebody associated to $\mathcal{T}$.
\end{prop}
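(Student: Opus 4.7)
The plan is to exhibit $M_\mathcal{T}$ and the double $DH$ of $H\setminus \Gamma$ as isometric hyperbolic manifolds by using the octahedral decompositions available on both sides. Let $H_1$ and $H_2$ denote two copies of $H\setminus \Gamma$, each tessellated by $n$ regular ideal hyperbolic octahedra $\mathcal{O}_i^{(1)}$ and $\mathcal{O}_i^{(2)}$ respectively, with blue faces paired according to $\mathcal{T}$ and red faces forming the totally geodesic boundary. The double $DH$ identifies $\partial H_1$ with $\partial H_2$ via the identity, so for each $i$ the four red faces of $\mathcal{O}_i^{(1)}$ are glued to those of $\mathcal{O}_i^{(2)}$ by the identity. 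This is precisely the construction of the Minsky block $B_i$ in Definition \ref{minsky}. Hence $DH$ inherits a natural decomposition into $n$ Minsky blocks, with the expected total of $2n$ regular ideal octahedra.

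Next I would verify that the gluings between these blocks inside $DH$ coincide with the isometries $\phi_j$ of Definition \ref{varietat}. A blue-face pairing $g_j\colon F\to G$ in $\mathcal{T}$ provides one identification in $H_1$ (between $F_1\subset\mathcal{O}_i^{(1)}$ and $G_1\subset\mathcal{O}_j^{(1)}$) and its mirror in $H_2$; together they induce an isometry between the boundary component of $B_i$ obtained by gluing $F_1$ to $F_2$ and the one of $B_j$ obtained from $G_1$ and $G_2$. This isometry is orientation-reversing, because doubling reverses orientation on one of the two copies, and under the correspondences of Section \ref{triang} it pairs the three cusps of the two boundary components according to the pairing of the edges of $\mathcal{T}$ induced by $g_j$. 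By the preceding proposition on $\mathrm{Isom}^+(B)$, an orientation-reversing isometry between two specified boundary components with a prescribed cusp pairing is unique, so the doubled gluing must equal $\phi_j$. Assembling all the blocks shows $DH\cong M_\mathcal{T}$.

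The main technical obstacle is the cusp-bookkeeping in the previous paragraph: one has to unwind the dictionaries of Section \ref{triang} (vertices of $O$ versus edges of $T$, red faces versus vertices of $T$, cusps of $B$ versus edges of $T$, and so on) to confirm that duplicating the blue-face identification of $g_j$ in $H_1$ and $H_2$ automatically pairs the three cusps of $\partial_F B_i$ with the three cusps of $\partial_G B_j$ in the way dictated by how $g_j$ identifies the edges of $F$ with those of $G$. This step is purely combinatorial but contains all the content of the proposition; once it is verified, the uniqueness observation above closes the argument without further computation, since both $M_\mathcal{T}$ and $DH$ are then obtained by assembling the same $n$ Minsky blocks along the same prescribed isometries.
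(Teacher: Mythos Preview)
Your proof is correct and follows essentially the same approach as the paper's own argument: both observe that doubling $H\setminus\Gamma$ along its red-face boundary turns each octahedron into a Minsky block $B_i$, and that the doubled blue-face gluings are forced to coincide with the $\phi_j$ by the face pairings of $\mathcal{T}$ together with the orientation-reversing condition. The paper's proof is considerably terser and does not spell out the uniqueness step, but the underlying idea is identical.
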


\begin{proof}
The manifold $H\setminus \Gamma$ is built by glueing together the blue faces of a set of regular ideal hyperbolic octahedra. Its boundary is tessellated in a loose sense by the red faces of the resulting complex. The manifold $M_{\mathcal{T}}$ is built by glueing together
the boundary components of copies of the block $B$.

Mirroring a regular ideal hyperbolic octahedron $\mathcal{O}$ along its red faces produces the block $B$. The boundary components of $B$ are the doubles of the blue faces of $\mathcal{O}$ along the ideal edges. The glueing of these boundary components is obtained by doubling in the obvious way the glueing of the blue faces of $\mathcal{O}$, since both are determined by the face pairings of  the triangulation $\mathcal{T}$ and the orientation-reversing condition. 
\end{proof}


\subsection{Presentations}
Given a triangulation $\mathcal{T}=(\{\Delta_i\}_{i=1}^n, \{g_j\}_{j=1}^{2n})$, we wish to find a presentation of the associated manifold $M_{\mathcal{T}}$ as a {\em partially framed} link.
\begin{defin}
A partially framed link is the data of a link $L$ in $S^3$, together with a framing on some (not necessarily all) the components of $L$.
\end{defin}
A partially framed link defines a (possibly non-compact) $3$-manifold in the obvious way: we start from the link complement of $L$ and perform a Dehn filling on the framed components, with the coefficients specified by the framing. The components with no framing correspond to non-compact ends of the manifold.

As a consequence of Proposition \ref{mirroring}, the manifold $M_{\mathcal{T}}$ is the complement of a link in a manifold $N$ obtained by mirroring an orientable handlebody $H$ along its boundary.
As mentioned before, the handlebody $H$ consists of $n$ $0$-handles (one for each tetrahedron of $T$), and $2n$ $1$-handles (one for each $g_j$). The handlebody $H$ has genus $n+1$, therefore  the manifold $N$ is a connected sum of $n+1$ copies of $S^2\times S^1$. 
The boundary of $H$ corresponds to an embedded surface $S$ of genus $n+1$ in $N$.

The manifold $N$ has a presentation as surgery on the trivial $n+1$-component link, with all framings equal to zero. This can be easily seen noticing that the Dehn filling on the unknot with zero framing produces $S^2\times S^1$ as a result. 

We can visualize the handlebody $H$ and the surface corresponding to $\partial H$ in this presentation. To do so, we embed the handlebody $H$ in $S^3=\mathbb{R}^3\cup \{\infty\}$, starting from a disjoint union of balls $B_1,\dots,B_n$ in $\mathbb{R}^3$ and connecting them with $1$-handles according to the face pairings. 
We pick a collection of $n+1$ one-handles in such a way that removing them from $H$ yields topologically a $3$-dimensional ball. We encircle these handles with small zero-framed components. The resulting trivial $n+1$ component link, with zero framings on every component, is a presentation of $N$ as mentioned previously. 


Now we have to describe which curves to remove from $N$ to obtain the cusps of $M_{\mathcal{T}}$. Notice that these curves can be represented as a system $A=(a_1, \dots, a_m)$ of disjoint nontrivial loops on the surface $\partial H$ as follows. $\partial H$ has an obvious decomposition into a union of $n$ four-holed spheres $S_1,\dots,S_n$ and $2n$ cylinders $S^1\times I$ connecting their boundary components, induced by the decomposition of $H$ as a union of handles. 

Every $4$-holed sphere $S_i$ corresponds to a tetrahedron of $\Delta_i$ of $\mathcal{T}$, and every boundary component of $S_i$ corresponds to a face of $\Delta_i$. Two of these $4$-holed sphere $S_i$ and $S_j$ share a boundary component if there is a face pairing $g_k$ between the correponding faces of $\Delta_i$ and $\Delta_j$ 
The face pairings of $g_1,\dots,g_{2n}$ define a way to pick in each $S_i$ a complete system of disjoint arcs connecting the boundary components, each arc corresponding  to an edge of $\Delta_i$, in such a way that 
the endpoints of these arcs join along the intersections $S_i\cap S_j$. 
Different choices for these systems of arcs differ by Dehn twists along the curves $S_i\cap S_j$
Adding the loops of $A$ to the presentation of the manifold $N$ produces a presentation of the manifold $M_{\mathcal{T}}$ as a partially framed link.

\begin{oss}
In the construction of this presentation we have made a number of choices: the embedding of the handlebody $H$ in $\mathbb{R}^3$, the choice of the one-handles around which we place the framed components of the link and the number of ``twists'' along the curves $S_i\cap S_j$ we use to connect the arcs to build $A$. All these different choices are related by a sequence of handle slides along zero-framed components.
\end{oss}
\begin{example}\label{semplice}
Consider the orientable triangulation $\mathcal{T}$ obtained by mirroring a tetrahedron $T$ in its boundary. Formally, we take two copies $T_1$ and $T$ of $T$, with opposite orientations, and glue them together along their boundary via the identity map.
A presentation of the manifold $M_{\mathcal{T}}$ associated to this triangulation is given in Figure \ref{clod}.

\begin{figure}[htpb]
\centering
\makebox[\textwidth][c]{
\includegraphics[width=0.4\textwidth]{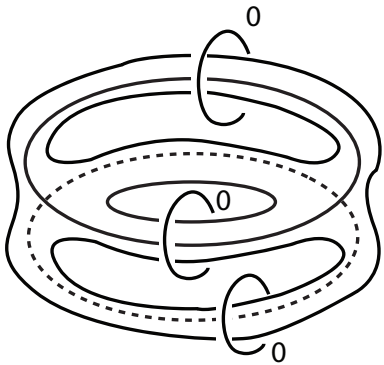}}
\caption{Presentation as a partially framed link of the manifold $M_{\mathcal{T}}$, where $\mathcal{T}$ is the triangulation obtained by mirroring a tetrahedron in its boundary.}\label{clod}
\end{figure}
\end{example}


\section{The ambient $4$-manifold}\label{ambiente}
In this section, we will construct the ambient $4$-manifold $\mathcal{X}$ of Theorem \ref{teoremone}. We begin by describing the $4$-dimensional polytope into which it tessellates.

\subsection{The 24-cell}
The 24-cell $C$ is the only regular polytope in any dimension $n\geq 3$ which is self-dual and not a simplex. It is defined as the convex hull in $\mathbb{R}^4$ of the set of points obtained by permuting the coordinates of $$(\pm 1, \pm 1, 0 , 0 ).$$ 
It has $24$ vertices, $96$ edges, $96$ faces of dimension $2$ and $24$ facets of dimension $3$ which lie in the affine hyperplanes of equations
$$ x_i=\pm 1, \;\;\; \pm x_1 \pm x_2 \pm x_3 \pm x_4 =2. $$
The dual polytope $C^*$ is the convex hull $$C^*=\text{Conv}\{\mathcal{R}, \mathcal{B}, \mathcal{G}\}$$ where
$\mathcal{G}$ is the set of $8$ points obtained by permuting the coordinates of $$(\pm 1,0,0,0)$$ and
$ \mathcal{R}\cup \mathcal{B}$ is the set of $16$ points of the form
$$\left(\pm \frac{1}{2}, \pm \frac{1}{2},\pm \frac{1}{2},\pm \frac{1}{2}\right),$$ 
with $\mathcal{R}$ (resp.\ $\mathcal{B}$) being the set of $8$ points with an even (resp.\ odd) number of minus signs in their entries. 
The facets of $C$ are regular octahedra in canonical one-to-one correspondence with the vertices of $C^*$, and are coloured accordingly in red, green and blue. This coloring is natural, every symmetry of the $24$-cell $C$ preserves the partition of the vertices of $C^*$ into the sets $\mathcal{R}, \mathcal{B}, \mathcal{G}$, and every permutation of $\{\mathcal{R},\mathcal{G},\mathcal{B}\}$ is realized by a symmetry of $C$. 
The vertex figure is a cube, in accordance with self-duality.
Notice furthermore that the convex envelope of $\mathcal{R}\cup \mathcal{B}$ is a hypercube, while the convex envelope of $\mathcal{G}$ is a $16$-cell. 

Being a regular polytope, the $24$-cell has a hyperbolic ideal realization, analogous to the one described in Section \ref{triang} for the case of the octahedron, which we call \emph{hyperbolic ideal $24$-cell} and denote by $\mathcal{C}$.
The facets of $\mathcal{C}$ are regular ideal hyperbolic octahedra.
The vertex figure of the hyperbolic $24$-cell is a euclidean cube, therefore all dihedral angles between facets are equal to $\pi/2$, which is a submultiple of $2\pi$. The $24$-cell is the only regular ideal hyperbolic $4$-dimensional polytope which has this property. This allows us to glue isometrically along their facets a finite number copies of the ideal $24$-cell so that the local geometric structures on each cell piece  together to give a global hyperbolic structure on the resulting non-compact manifold (see \cite{ratcliffetschanz2} and \cite{martelli}).

\subsection{Mirroring the 24-cell}\label{mirror}
We begin the construction of the hyperbolic $4$-manifold $\mathcal{X}$ by mirroring a $24$-cell along its green facets. 
\begin{defin}\label{mirrored}
The {\em mirrored $24$-cell} $\mathcal{S}$ is the space obtained from two copies $\mathcal{C}_1$ and $\mathcal{C}_2$ of the regular ideal hyperbolic $24$-cell $\mathcal{C}$ with opposite orientations, by gluing the green facets of $\mathcal{C}_1$ to the green facets of $\mathcal{C}_2$ via the map which corresponds to the identity on $\mathcal{C}$ .
\end{defin}

\begin{prop}\label{specchiata}
The mirrored $24$-cell $\mathcal{S}$ is a non-compact $4$-dimensional manifold with boundary. The boundary $\partial \mathcal{S}$ decomposes into $16$ strata of dimension $3$, each isomorphic to the Minky block $B$ of Definition \ref{minsky}. These boundary strata intersect at $32$ strata of dimension $2$, each isomorphic to a sphere with three punctures, with dihedral angle $\pi/2$.
\end{prop}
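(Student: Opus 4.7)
The plan is to analyze the combinatorial structure of the boundary of $\mathcal{S}$, which is built from the octahedral facets of $\mathcal{C}_1$ and $\mathcal{C}_2$ that are not green. Since only the $8$ green facets of each copy are identified by the mirror gluing, $\partial\mathcal{S}$ consists of the $16$ red octahedra plus the $16$ blue octahedra coming from the two copies, for a total of $32$ octahedral pieces; the main task is to show that they assemble into $16$ Minsky blocks meeting along $32$ spheres with three punctures.

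The first step pairs these $32$ octahedra into $16$ Minsky blocks. Fix a red facet $R$ of $\mathcal{C}$. Using the dual polytope $C^*$, one sees that among the $8$ facets sharing a $2$-face with $R$, exactly $4$ are green and $4$ are blue. Hence in $\mathcal{S}$ the two copies $R_1 \subset \mathcal{C}_1$ and $R_2 \subset \mathcal{C}_2$ of $R$ become identified along their $4$ green-adjacent triangles, while their $4$ blue-adjacent triangles remain on $\partial\mathcal{S}$. To identify $R_1\cup R_2$ with the Minsky block of Definition \ref{minsky}, it is enough to verify that the $4$ green-adjacent triangles of $R$ form one class of the checkerboard coloring of $R$. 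This can be done explicitly: taking $R$ to be the octahedron with vertices $e_i+e_j$ for $1\le i<j\le 4$, the $8$ triangles of $R$ are indexed by sign patterns $(\varepsilon_1,\varepsilon_2,\varepsilon_3)\in\{\pm\}^3$ that record which vertex in each pair of antipodes is used, and a direct check shows that the patterns with an even number of minus signs correspond precisely to the triangles contained in a green facet $x_k=1$, while those with an odd number correspond to triangles contained in a blue facet $\pm x_1\pm x_2\pm x_3\pm x_4=2$. Since these two parity classes are exactly the two color classes of the checkerboard coloring of $R$, the piece $R_1\cup R_2$ is a Minsky block. The same argument applied to blue facets yields $8+8=16$ Minsky blocks in $\partial\mathcal{S}$.

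To describe the $2$-dimensional strata I would examine how two adjacent Minsky blocks meet. Each of the $4$ remaining blue-adjacent triangles of $R_1$ is matched along its three ideal edges to the corresponding triangle of $R_2$ through the four green-face identifications; the result is a sphere with three punctures, which is a boundary component of the red block $R_1\cup R_2$ and coincides with a boundary component of the blue Minsky block built from the blue neighbor of $R$ in $\mathcal{C}$. Such $2$-strata are thus in bijection with the $2$-faces of $\mathcal{C}$ shared between a red and a blue facet; since every red facet has $4$ blue neighbors, their number is $8\cdot 4=32$. Finally, the dihedral angle along each such $2$-stratum equals the dihedral angle of $\mathcal{C}$ along a $2$-face, which is $\pi/2$ since the vertex figure of the ideal hyperbolic $24$-cell is a Euclidean cube. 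The main obstacle is the explicit checkerboard identification in the second paragraph; the remaining counting and the dihedral-angle computation are immediate once it is in place.
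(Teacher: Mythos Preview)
Your proof is correct and follows essentially the same approach as the paper's: both identify each boundary $3$-stratum as the double of a red or blue octahedral facet of $\mathcal{C}$ along its green-adjacent triangles, and each boundary $2$-stratum as the double of a red/blue triangular $2$-face along its edges. The paper simply asserts that the green-adjacent triangles form one checkerboard class and leaves the counts and the dihedral angle implicit, whereas you supply an explicit coordinate verification of the checkerboard claim and spell out the enumeration; this makes your argument more self-contained but not structurally different.
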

\begin{proof}
Each boundary $3$-stratum is obtained by doubling a red or a blue octahedral facet of $\mathcal{C}$ along the triangular faces that separate it from a green octahedral facet. Up to an appropriate choice of the coloring, this is exactly the construction of Definition \ref{minsky}.
Each boundary $2$-stratum is built by mirroring in its boundary edges a triangular $2$-stratum of $\mathcal{C}$ which separates a red octahedron from a blue one, producing a thrice-punctured sphere.
\end{proof}
The cusp section of the manifold $\mathcal{S}$ is pictured in Figure \ref{cuspide}.
It is obtained by mirroring the cusp section of the $24$-cell, which is a Euclidean cube, along a pair of opposite faces (corresponding to the green facets of $\mathcal{C}$). The result is $Q\times S^1$, where $Q$ is a flat square with sides of length one, and the $S^1$ factor has length two.
The boundary $3$-strata naturally correspond to sets of the form $I\times S^1$, where $I$ is a side of $Q$ and the intersection of any two of these sets in a common edge has indeed angle $\pi/2$.
\begin{figure}[htpb]
\centering
\includegraphics{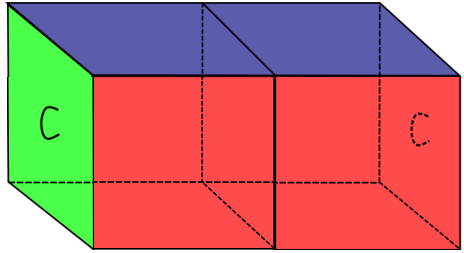}
\caption{A fundamental domain for the cusp section of the mirrored $24$-cell. The opposite faces of each cube share the same colour, and the green faces are identified in pairs.}\label{cuspide}
\end{figure}

There is a natural correpondence between certain strata of the mirrored $24$-cell $\mathcal{S}$ and of the $24$-cell $\mathcal{C}$, described as follows:
\begin{enumerate}
\item $\{\text{Cusps of } \mathcal{S}\} \leftrightarrow \{\text{Vertices of } \mathcal{C}\}$ 
\item $\{\text{Red and blue 3-strata of } \mathcal{S}\} \leftrightarrow \{\text{Red and blue $3$-strata of }\mathcal{C}\}$
\item $\{\text{2-strata of } \mathcal{S}\} \leftrightarrow \{\text{red/blue 2-strata of }\mathcal{C}\}$ 

\noindent By red/blue $2$-stratum we mean a $2$-stratum of $\mathcal{C}$ which bounds a red facet on one side and a blue facet on the other.
\end{enumerate}

The correpondence just mentioned allows us label the strata of $\mathcal{S}$ in the following way:
\begin{enumerate}
\item We label each cusp of $\mathcal{S}$ with the corresponding  vertex of the $24$-cell $\mathcal{C}$, namely by a permutation of a $4$-uple of the form $(\pm1, \pm1,0,0)$, and for brevity we write $\pm$ instead of $\pm 1$. Notice that there always have to be two $0$ entries.
\item Each red or blue facet  of the $24$-cell lies in an affine hyperplane of equations $$\pm x_1 \pm x_2 \pm x_3 \pm x_4 =2$$ and we label the corresponding $3$-stratum of $\mathcal{S}$ by the $4$-uple of $+,-$ signs in this equation. The red/blue coloring is again determined by the parity of the number of minus signs.
\item A red and a blue facet of $\mathcal{C}$ are adjacent along a $2$-stratum if and only if their labels differ by the choice of one sign. Replacing this sign by a $0$, we see that the red/blue $2$-strata of $\mathcal{C}$ and the correponding $2$-strata of $\mathcal{S}$ are naturally labeled by  $4$-uples of $+,-,0$ symbols, with one $0$ entry.
\end{enumerate}
Notice that a $3$-stratum of $\mathcal{S}$ bounds a certain cusp if and only the non-zero entries in the labeling of the cusp coincide with the corresponding $+,-$ entries in the labeling of the $3$-stratum.

There is a group $G$ of affine transformations acting on the $24$-cell and preserving the coloring of the facets. It is generated by the reflections in the hyperplanes $\{x_i=0\}$ and the permutations of the coordinates, and is isomorphic to $(\mathbb{Z}/2\mathbb{Z})^4\times \mathfrak{S}_4$. As a consequence of the correpondence between strata mentioned above, it acts also on the mirrored $24$-cell $\mathcal{S}$, and we can use it to pair the boundary $3$-strata.

\subsection{Face pairings of the boundary 3-strata}
As explained in Proposition \ref{specchiata}, the $2$-strata of the mirrored $24$-cell $\mathcal{S}$ lie at the intersection of a red and a blue $3$-stratum, and the dihedral angle at the intersection is equal to $\pi/2$. As a consequence of this fact, {\em any} pairing of the blue $3$-strata of $\mathcal{S}$ ``kills'' all the $2$-strata, producing a hyperbolic manifold with disjoint totally geodesic boundary components, tessellated by the red $3$-strata of $\mathcal{S}$.

\begin{defin}\label{erre}
We denote by $\mathcal{R}$ the manifold obtained from the mirrored $24$-cell $\mathcal{S}=\mathcal{C}_1\cup \mathcal{C}_2$ by pairing its blue $3$-strata in the following way:
\begin{enumerate}
\item $\pm(+,+,-,+)$ in $\mathcal{C}_1$ (resp. $\mathcal{C}_2$) is paired with $\pm(+,+,+,-)$ in $\mathcal{C}_1$ (resp. $\mathcal{C}_2$) via the map $$F(x,y,z,w)=(x,y,w,z).$$
\item $\pm(+,-,+,+)$ in $\mathcal{C}_1$ (resp. $\mathcal{C}_2$) is paired with $\pm(-,+,+,+)$ in $\mathcal{C}_1$ (resp. $\mathcal{C}_2$) via the map $$G(x,y,z,w)=(y,x,z,w).$$
\end{enumerate}
\end{defin}

\begin{teo}
The manifold $\mathcal{R}$ of Definition \ref{erre} is an orientable hyperbolic non-compact $4$-manifold with totally geodesic boundary.
It has ten cusps whose sections fall into three homothety classes: there are four {\em small} cusps, two {\em medium} cusps and four {\em large} cusps.
\end{teo}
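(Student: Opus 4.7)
My plan is to verify in turn the hyperbolic structure with totally geodesic boundary, orientability, the cusp count, and the homothety classification of their cross-sections.

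The interior of each copy of $\mathcal C$ is hyperbolic by construction and every face identification is a hyperbolic isometry, so only local compatibility at strata of positive codimension requires checking. Around a green/green $2$-face interior to $\mathcal S$, four wedges of $\pi/2$ meet, two from each $\mathcal C_i$, summing to $2\pi$. Around a red/blue $2$-stratum of $\mathcal S$ the dihedral angle is $\pi/2$ by Proposition \ref{specchiata}; each blue pairing identifies such a $2$-stratum with exactly one other, so the two incident red $3$-strata meet along the identified edge at total angle $\pi/2+\pi/2=\pi$. This simultaneously extends the hyperbolic structure across these former $2$-strata and forces the red boundary to be totally geodesic. For orientability I would give $\mathcal C_1$ and $\mathcal C_2$ opposite orientations, so the identity on green facets is already orientation-reversing, and note that $F$ and $G$ are coordinate transpositions of $\mathbb R^4$ and hence orientation-reversing on the paired blue facets.

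To count the cusps, I would compute the orbits of the $24$ vertices of $\mathcal C$ under the identifications induced by $F$ and $G$: at each vertex, the two blue facets through it are paired with two other blue facets, and the vertex is identified with its image under the corresponding transposition. A direct calculation yields four singleton orbits, $\pm(1,1,0,0)$ and $\pm(0,0,1,1)$, two orbits of size $2$ given by $\{(1,-1,0,0),(-1,1,0,0)\}$ and $\{(0,0,1,-1),(0,0,-1,1)\}$, and four orbits of size $4$ partitioning the remaining sixteen vertices. Summing gives $4+2+4=10$ cusps.

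For the homothety classification, the cusp section coming from an orbit of size $n$ is obtained by gluing $n$ copies of $Q\times S^1$ along their blue annuli, producing a thickened torus with a circle of circumference $n$ in the blue direction, a closed interval of length $1$ in the red direction, and a circle of circumference $2$ in the green direction. For $n\in\{1,2,4\}$ these three thickened tori have pairwise distinct triples of ratios among their three linear scales, so they are pairwise non-homothetic. Within a size class the symmetries of $\mathcal C$ that preserve the colouring and the blue pairing pattern---a subgroup of $(\mathbb Z/2\mathbb Z)^4\rtimes\mathfrak S_4$---act transitively on the cusps, so cusps of the same size are isometric. The main obstacle is the careful orbit enumeration and the verification that each orbit assembles, via the $F$/$G$ identifications, into the product form claimed; the angle, orientability and symmetry arguments are short once this combinatorial bookkeeping is in place.
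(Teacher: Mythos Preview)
Your proof is correct and follows essentially the same route as the paper: both verify orientability from the fact that $F$ and $G$ are coordinate transpositions, and both classify the cusps by computing the orbits of the $24$ ideal vertices under the induced identifications and then assembling each cusp section from the corresponding copies of $Q\times S^1$ (the paper does this pictorially, you do it combinatorially, and you add the explicit transitivity and ratio arguments for the homothety classification that the paper leaves implicit). One small inaccuracy worth noting: there are no green/green $2$-faces in $\mathcal C$---every $2$-face borders facets of two \emph{distinct} colours, since no two same-coloured vertices of $C^*$ are adjacent---so that case of your angle check is vacuous; the only genuine codimension-two condition is the red/blue one you handle, which is exactly what the paper uses.
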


\begin{proof}
The pairing maps  are simply permutation of two coordinates. Since they are orientation reversing, $\mathcal{R}$ is an oriented $4$-manifold with boundary. 
To check that it has a complete hyperbolic structure induced from that of $\mathcal{S}$, we need to check that the cusp sections of $\mathcal{S}$ glue together into Euclidean manifolds to produce the cusp sections of $\mathcal{R}$.

For our purposes, it is convenient to have a graphical representation of the cusp sections of the mirrored $24$-cell $\mathcal{S}$. Consider Figure \ref{cuspide}. Notice that each cusp of $\mathcal{S}$ is bounded by four $3$-strata, two red and two blue. A red and a blue $3$-stratum intesect orthogonally in a $2$-stratum. By taking a vertical section of parallelepiped of Figure \ref{cuspide}, we obtain a planar representation where the cusp corresponds to a square, the boundary $3$-strata correspond to the sides of such square, and the $2$-strata correspond to the vertices as in Figure \ref{sezionecuspide}.

The face pairings will induce identifications of the blue edges of the different squares, and we can describe the cusp shapes of $\mathcal{R}$ by taking the product of the resulting flat surface with a circumference $S^1$ of length two.

\begin{enumerate}
\item The four cusps labeled $\pm(+,+,0,0)$ and $\pm(0,0,+,+)$ are fixed by the face-pairing maps. The opposite blue faces of the corresponding squares are identified by the pairing maps to produce a flat cylinder $C_1$ of width and length equal to one. The cusp shape is a product $C_1\times S^1$. We call these the {\em small} cusps of the manifold $\mathcal{R}$.

\begin{figure}[ht]
\centering
\includegraphics{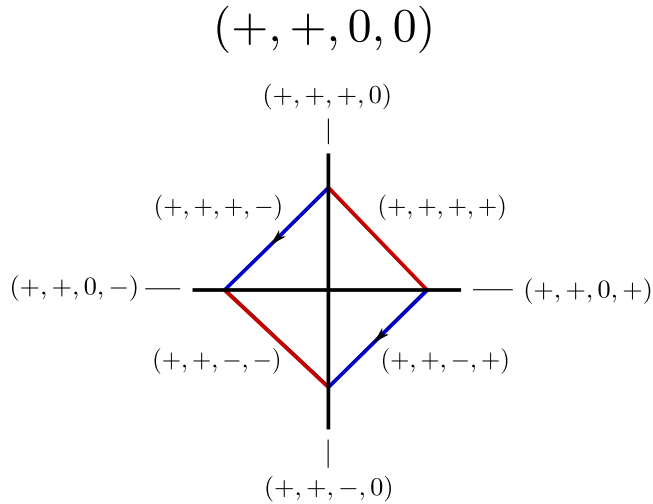}
\caption{Example of planar representation of a cusp section of the mirrored $24$-cell for the small cusp labeled $(+,+,0,0)$. Arrows show the pairings between the blue faces induced by the map $F$.}\label{sezionecuspide}
\end{figure}

\item The cusp $(+,-,0,0)$ is paired to $(-,+,0,0)$ by $G$, and in a similar way $(0,0,+,-)$ is paired to $(0,0,-,+)$ by $F$. The identifications between the first two cusps are represented in the Figure \ref{cuspidemedia}. We call these two the {\em medium} cusps. The cusp shape is a product $C_2\times S^1$, where $C_2$ is a cylinder of width one and  length two.

\begin{figure}
\centering
\makebox[\textwidth][c]{
\includegraphics[width=\textwidth]{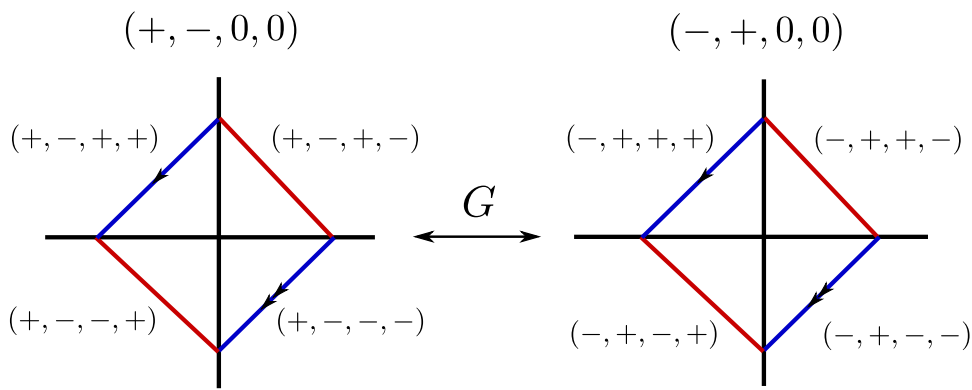}}
\caption{Face pairings for the medium cusp obtained by pairing $(+,-,0,0)$ to $(-,+,0,0)$ via the map $G$.}\label{cuspidemedia}
\end{figure}

\item The remaining sixteen cusps of the mirrored $24$-cell are identified in four groups of four. In all these four cases, the pairings between the blue boundary components produce the same flat manifold, and one example is shown in Figure \ref{cuspidegrande}. We call these cusps {\em large}. The cusp shape is a product $C_3\times S^1$, where $C_3$ is a cylinder of width one and  length four.

\begin{figure}
\centering
\makebox[\textwidth][c]{
\includegraphics[width=\textwidth]{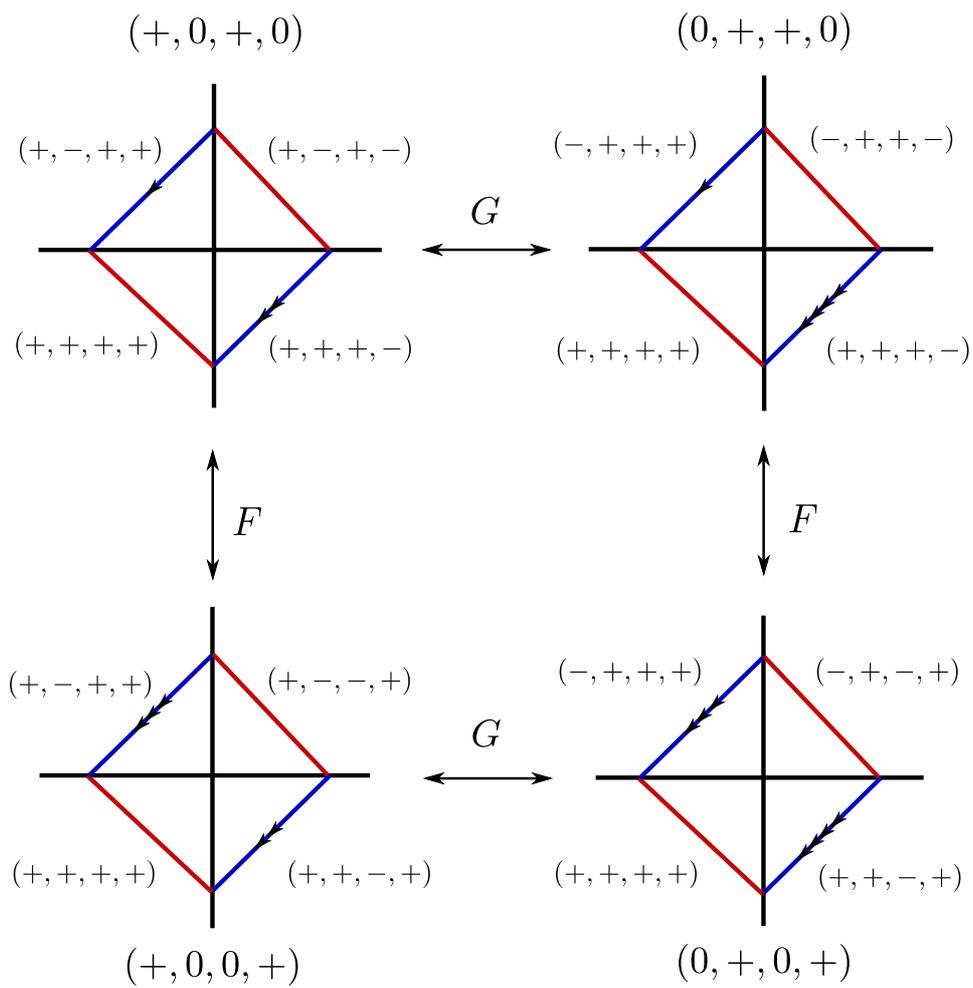}}
\caption{Face pairings for the large cusps.}\label{cuspidegrande}
\end{figure}
\end{enumerate}
\end{proof}

\begin{prop}\label{5bordi}
The manifold $\mathcal{R}$ of Definition \ref{erre} has five disjoint totally geodesic boundary components which fall into two isometry classes: there are four {\em small} boundary components and one {\em large} boundary component.
\end{prop}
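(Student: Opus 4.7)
The plan is to describe the boundary $\partial\mathcal{R}$ as the result of gluing the eight red Minsky blocks inside $\mathcal{S}$ along their thrice-punctured sphere boundary components, and then to count the connected components that emerge.

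Each red $3$-stratum of $\mathcal{S}$ is a Minsky block whose four thrice-punctured sphere boundary components are exactly the $2$-strata of $\mathcal{S}$ incident to it. A $2$-stratum with label $(\eta_1,\ldots,\eta_i=0,\ldots,\eta_4)$ lies simultaneously on the boundary of the red $3$-stratum obtained by replacing the $0$ with the sign producing even parity, and on the boundary of the blue $3$-stratum obtained by the opposite sign choice. The blue pairings of Definition \ref{erre} therefore induce identifications among the $32$ boundary $2$-strata of the red blocks: each pairing map acts on labels as a coordinate transposition, bijectively sending the four boundary $2$-strata of one paired blue block onto those of the other. Applying $F=(3\,4)$ or $G=(1\,2)$ to the label of a $2$-stratum then determines which boundary of which red block gets glued to which.

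The key observation is that $F$ and $G$ generate a Klein four-group $V\cong(\mathbb{Z}/2)^2$ acting by coordinate permutations on the even-parity labels of the red $3$-strata, and the rule above shows that a boundary $2$-stratum of the red block labeled $\epsilon$ gets identified with a boundary $2$-stratum of the red block labeled $F(\epsilon)$ or $G(\epsilon)$. Hence the connected components of $\partial\mathcal{R}$ are in canonical bijection with the $V$-orbits on the set of eight even-parity labels. The four labels
\[
(+,+,+,+),\quad (-,-,+,+),\quad (+,+,-,-),\quad (-,-,-,-),
\]
which satisfy $\epsilon_1=\epsilon_2$ and $\epsilon_3=\epsilon_4$, are each fixed by $V$ and form four singleton orbits; the remaining four labels form a single $V$-orbit of size four. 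This yields exactly five boundary components: four \emph{small} ones, each built from a single Minsky block with its four boundary punctured spheres paired up, and one \emph{large} component obtained by chaining together four Minsky blocks.

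To get the isometry classification, I exhibit explicit symmetries of $\mathcal{R}$: the sign changes $\mathrm{diag}(-1,-1,1,1)$ and $\mathrm{diag}(1,1,-1,-1)$ preserve the green facets, commute with both $F$ and $G$, and therefore descend to isometries of $\mathcal{R}$; together they act transitively on the four singleton $V$-orbits, so the corresponding boundary components are pairwise isometric. The main obstacle is pure bookkeeping: one must keep straight which red block owns each $2$-stratum and check that the size-four $V$-orbit really does link the four Minsky blocks into a connected $3$-manifold (rather than, say, two disjoint pieces). Once the correspondence ``components $\leftrightarrow$ $V$-orbits'' is secured, the counting is mechanical.
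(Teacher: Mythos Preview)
Your proof is correct and takes essentially the same approach as the paper—tracking how the blue pairings $F$ and $G$ connect the eight red Minsky blocks—though your packaging via $V=\langle F,G\rangle$-orbits on the even-parity labels is a tidier abstraction of the paper's direct label-by-label inspection, and your symmetry argument via $\mathrm{diag}(-1,-1,1,1)$ and $\mathrm{diag}(1,1,-1,-1)$ is a clean way to get the isometry of the four small components. The paper goes further by explicitly writing down the triangulations encoding each boundary component (one tetrahedron for each small component, four for the large one), which it needs in Section~\ref{presentazione} but which is not required for the proposition as stated.
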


\begin{proof}
The boundary components of $\mathcal{R}$ are obtained by glueing together the red $3$-strata of the mirrored $24$-cell $\mathcal{S}$ along their boundary $2$-strata. Notice that the {\em red} boundary octahedra $O_i$ of the $24$-cell have a natural green/blue checkerboard coloring on their faces. Suppose $f$ is a $2$-dimensional face of $O_i$. The face $f$ will be adjacent to $O_i$ and another octahedron $O_j$ which will be colored either in green or blue. We color $f$ with the color of $O_j$.
Each boundary $3$-stratum of $\mathcal{S}$ is obtained by mirroring an octahedron $O_i$ along its green faces, therefore it is isomorphic to the Minsky block $B$ of Definition \ref{minsky}.

The pairings between the blue facets of $\mathcal{S}$ induce glueings between the $2$-dimensional faces of these boundary $3$-strata, producing boundary components whose topology is encoded by triangulations as explained in Section \ref{triang}.

We begin by considering the $3$-strata of $\mathcal{S}$ labeled $\pm(+,+,+,+)$ and $\pm(+,+,-,-)$. For each such $3$-stratum, it is easy to check that the four boundary $2$-strata are identified in pairs. The resulting $3$-manifolds are all isometric, and are obtained from a triangulation $\mathcal{T}$ with one tetrahedron. We call these the {\em small} boundary components of the manifold $\mathcal{R}$.
Let's take a look at the case of the component labeled $(+,+,+,+)$. Its four boundary components are labeled $(+,+,+,0)$, $(+,+,0,+)$, $(+,0,+,+)$ and
$(0,+,+,+)$. The pairings are shown below in Figure \ref{bordopiccolo}, together with their behaviour on the cusps.

\begin{figure}[ht!]
\centering
\makebox[\textwidth][c]{
\includegraphics[width=\textwidth]{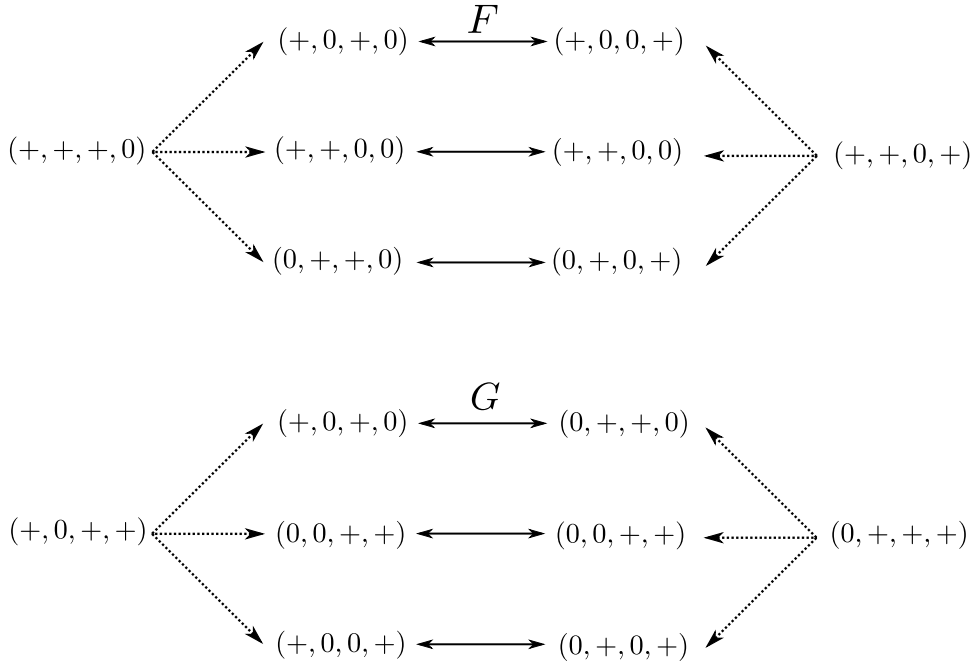}}
\caption{Face pairings for the boundary component $(+,+,+,+)$. Dotted lines indicate adjacencies between boundary $2$-strata and the cusps of $\mathcal{S}$.}\label{bordopiccolo}
\end{figure}

The triangulation associated to the small boundary components is represented in figure \ref{untetraedro}, and is obtained in the following way. Start with one tetrahedron $\mathcal{T}$, and pick a pair of opposite edges $a$ and $b$, such that $a$ separates face $A_1$ and $A_2$ and $b$ separates faces $B_1$ and $B_2$. Identify the faces $A_1$ and $A_2$ by the unique orientation reversing map which fixes their common edge, i.e.\ by ``folding'' along $a$, and do the same for $B_1$ and $B_2$. The manifold defined by such triangulation has three cusps. Two are boundary components of a small cusp of $\mathcal{R}$, and one is a boundary component of a large cusp of $\mathcal{R}$.

\begin{figure}[ht!]
\centering
\includegraphics{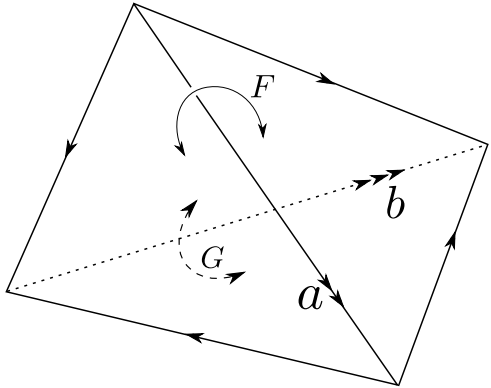}
\caption{Triangulation associated to a small boundary component.}\label{untetraedro}
\end{figure}

There is another {\em large} boundary component $M$, obtained by glueing together the  four faces labeled $\pm(+,-,+,-)$ and $\pm(+,-,-,+)$. The glueings are shown in Figure \ref{bordogrande2}. 
\begin{figure}
\centering
\makebox[\textwidth][c]{
\includegraphics[width=\textwidth]{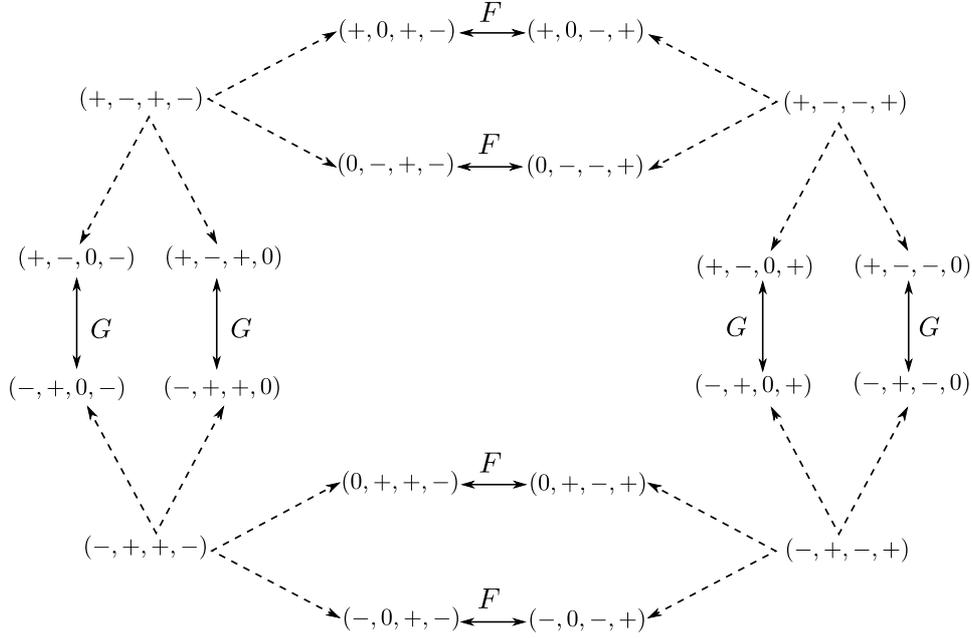}}
\caption{Face pairings for the large boundary component. Dashed lines indicate adjacencies between boundary $3$-strata and their boundary $2$-strata. The arrows labeled $F$ and $G$ show the face pairings between the $2$-strata.}\label{bordogrande2}
\end{figure}

We have to determine how the faces are glued together, and this depends on the behaviour of the pairings on the cusps. To do so, we represent each boundary $3$-stratum $\mathcal{B}_i$ with a copy $\mathcal{T}_i$ of the tetrahedron. Recall that the faces of $\mathcal{T}_i$ are in one-to-one correspondence with the boundary $2$-strata of $\mathcal{B}_i$. We assign to the faces of each tetrahedron a number $n\in\{1,2,3,4\}$, with the rule that to a face is assigned the number $n$ if the label of corresponding boundary $2$-stratum has a zero in the $n$th entry.
In each tetrahedron, every face is assigned a different number, and each edge is identified by the couple $\{i,j\}$ of integers assigned to its two adjacent faces.

Notice that all face pairings identify faces of different tetrahedra $\mathcal{T}_i$ and $\mathcal{T}_j$ in a way which {\em preserves} their numbering. The map $F$ identifies faces numbered with $n\in\{1,2\}$, while $G$ identifies faces numbered with $n\in\{3,4\}$. Furthermore the map $F$ (resp.\ $G$) identifies the edges labeled $\{1,2\}$  (resp.\ $\{3,4\}$), and there is a unique way to do so in an orientation-reversing way.
The resulting triangulation is represented in Figure \ref{bordogrande}.
\begin{figure}[ht]
\centering
\makebox[\textwidth][c]{
\includegraphics[width=0.8\textwidth]{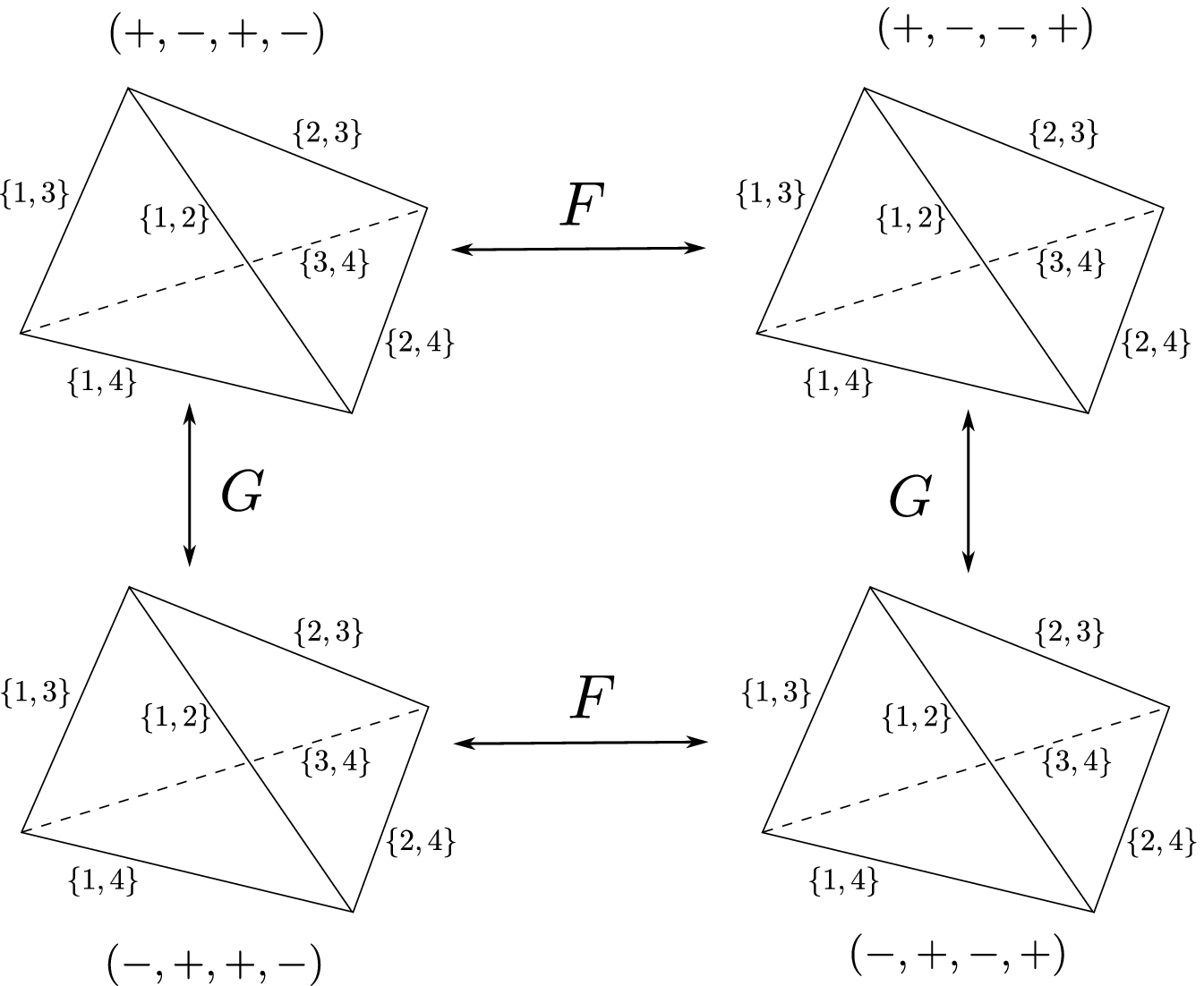}}
\caption{Triangulation encoding the large boundary component $M$ of $\mathcal{X}$. The map $F$ pairs the two top faces of the indicated tetrahedra, sending edges labeled $\{1,2\}$ to 
edges labeled $\{1,2\}$, and edges labeled $\{i,3\}$ to edges labeled $\{i,4\}$ for $i=1,2$. The map $G$ pairs the bottom faces, sending edges labeled $\{3,4\}$ to edges labeled $\{3,4\}$, and edges labeled $\{1,i\}$ to edges labeled $\{2,i\}$ for $i=3,4$. All pairings are orientation reversing.}\label{bordogrande}
\end{figure}

\end{proof}

\begin{oss}
The volume of the large boundary component $M$ of the manifold $\mathcal{R}$ is $8\cdot v_{\mathcal{O}}\approx 29.311$, where $v_{\mathcal{O}}$ is the volume of the regular ideal hyperbolic octahedron.
\end{oss}

\begin{oss}\label{cuspidierre}
The large boundary component $M$ of the manifold $\mathcal{R}$ has eight cusps, four of which come from boundary components of large cusps of $\mathcal{R}$. They have the cusp shape of a torus obtained by identifying opposite sides of a $2\times 4$ rectangle. We call these the {\em large} cusps of $M$.
The other four come from boundary components of the medium cusps of $\mathcal{R}$, and they correspond to the edges of the tetrahedra labeled $\{1,2\}$ and $\{3,4\}$. Their shape is that of a torus obtained by identifying opposite sides of a square of sidelength two. These are the {\em small} cusps of $M$.
\end{oss}

\begin{teo}\label{teoremone2}
The large boundary component $M$ of the manifold $\mathcal{R}$  geometrically bounds a cusped orientable hyperbolic $4$-manifold $\mathcal{X}$ of finite volume.
\end{teo}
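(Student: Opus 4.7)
The strategy is to eliminate the four small boundary components of $\mathcal{R}$ by identifying them in pairs via suitable orientation-reversing isometries, so that the large boundary $M$ is left as the only totally geodesic boundary component of the resulting manifold $\mathcal{X}$. Such isometries exist because the four small boundary components are pairwise isometric, all arising from the one-tetrahedron triangulation of Figure \ref{untetraedro}. Since $\mathcal{R}$ is already tessellated by two copies of the ideal $24$-cell $\mathcal{C}$, the same will hold automatically for $\mathcal{X}$, in agreement with Theorem \ref{teoremone}.

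A convenient source of pairings is the symmetry group $G=(\mathbb{Z}/2\mathbb{Z})^4\times \mathfrak{S}_4$ of the $24$-cell preserving the facet coloring: the four small red facets $\pm(+,+,+,+)$ and $\pm(+,+,-,-)$ of $\mathcal{C}$ form a single $G$-orbit, and by selecting elements of $G$ that swap them in pairs while preserving the blue face-pairings $F$ and $G$ of Definition \ref{erre}, one obtains isometries of $\mathcal{R}$ exchanging the four small boundary components pairwise. A suitable sign choice makes these identifications orientation-reversing with respect to the boundary orientation inherited from $\mathcal{R}$, so that $\mathcal{X}$ is an orientable hyperbolic $4$-manifold. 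Since the identifications are performed along closed totally geodesic $3$-manifolds disjoint from $M$, the boundary component $M$ survives untouched as a connected totally geodesic boundary of $\mathcal{X}$.

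The main technical obstacle is to verify that the cusps of $\mathcal{R}$ meeting the small boundary components close up to complete Euclidean $3$-manifolds in $\mathcal{X}$. Each small boundary component has three cusps, two of which are boundary components of small cusps of $\mathcal{R}$ and one of a large cusp of $\mathcal{R}$, as one reads directly from the triangulation of Figure \ref{untetraedro}. The pairings of the small boundary components must therefore match these cross-sections consistently around every cusp of $\mathcal{X}$, which reduces to a combinatorial check on the planar representations of Figures \ref{sezionecuspide}--\ref{cuspidegrande} combined with the cusp shapes listed in Remark \ref{cuspidierre}. Once this compatibility is established, the standard criterion for completeness of a hyperbolic structure obtained by ideal polytope gluings (as in the proof for $M_\mathcal{T}$ earlier) applies, and $\mathcal{X}$ inherits a complete finite-volume hyperbolic structure of volume $8\pi^2/3$ with $\partial\mathcal{X}=M$ totally geodesic, as required.
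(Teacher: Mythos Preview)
Your proposal is correct and follows the same strategy as the paper: pair the four small boundary components of $\mathcal{R}$ using a symmetry of the $24$-cell compatible with the blue face-pairings, leaving $M$ as the unique geodesic boundary of the resulting $\mathcal{X}$. The paper is simply more concrete: it exhibits the explicit map $K(x,y,z,w)=(-y,-x,z,w)$, which commutes with both $F$ and $G$ (hence descends to an isometry between the small boundary components), is orientation-reversing on $\mathbb{R}^4$, and sends the facets $\pm(+,+,+,+)$ to $\mp(+,+,-,-)$. Your abstract existence argument via the $G$-orbit is fine, but you should note that it is precisely the commutation with $F$ and $G$ (not merely membership in the orbit) that guarantees the map respects the internal gluings of each small boundary component; the paper's $K$ satisfies this, and once such a map is in hand the cusp check you describe is immediate from the planar pictures already established for $\mathcal{R}$.
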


\begin{proof}
As stated in Proposition \ref{5bordi}, $\mathcal{R}$ has a total of five disjoint totally geodesic boundary components, four of which (the small ones) are isometric to each other, so it suffices to pair these four components together isometrically. We may for instance consider the orientation reversing map $K:\mathbb{R}^4\rightarrow\mathbb{R}^4$ defined by $$K(x,y,z,w)=(-y,-x,z,w).$$ We use it to pair the boundary components of $\mathcal{R}$ with labels
 $\pm(+,+,+,+)$ and $\pm(+,+,-,-)$, obtaining an orientable $4$-manifold $\mathcal{X}$. The hyperbolic structure on $\mathcal{R}$ extends to one on $\mathcal{X}$, which has one totally geodesic boundary component isomorphic to $M$.
The manifold $\mathcal{X}$ is tessellated by two regular ideal hyperbolic $24$-cells, and has volume $2\cdot v_m$, where $v_m=4\pi^2/3$ is the volume of the regular ideal hyperbolic $24$-cell.
\end{proof}

\begin{oss}
The volume of a hyperbolic $4$-manifold $\mathcal{X}$ with totally geodesic boundary is proportional to its Euler characteristic via the rule 
$$\text{Vol}(\mathcal{M})=4\pi^2/3\cdot \chi(\mathcal{X}).$$ This can be easily seen by noticing that the manifold obtained by doubling $\mathcal{X}$ in its boundary has twice the Euler characteristic of $\mathcal{X}$, since the Euler characteristic of the hyperbolic boundary of $\mathcal{X}$ is zero. Moreover, it has twice the volume of $\mathcal{X}$. The claim then follows from applying the generalized Gauss-Bonnet theorem.

This allows us to conclude that the manifold $\mathcal{X}$ constructed in the proof of Theorem \ref{teoremone2} has Euler characteristic two, or equivalently twice the mi\-nimal volume for a hyperbolic manifold with totally geodesic boundary. 
\end{oss}

\section{The boundary manifold as a link complement}\label{presentazione}
In this section we will exhibit a presentation of the geometrically bounding manifold $M$ as link complement in $S^3$.
Recall from Section \ref{triang} that it is possible to associate a presentation as a partially framed link to every orientable manifold built from a triangulation $\mathcal{T}$. We have seen in the previous chapter that the geometrically bounding manifold $\mathcal{M}$ is constructed from a triangulation $\mathcal{T}$ with four tetrahedra
as in figure \ref{bordogrande}.

The presentation of $M$ as partially framed link $L$ is shown in Figure \ref{framedlink1} (top). The zero-framed components are labeled $F_i$, for  $i=1,\dots,5$. 
The four components labeled $y,r,g,b$ correspond to the large cusps of $M$, while the components labeled $s_i$, for $i=1\dots,4$ correspond to the small ones. 

\begin{figure}[ht!]
\centering
\makebox[\textwidth][c]{
\includegraphics[width=\textwidth]{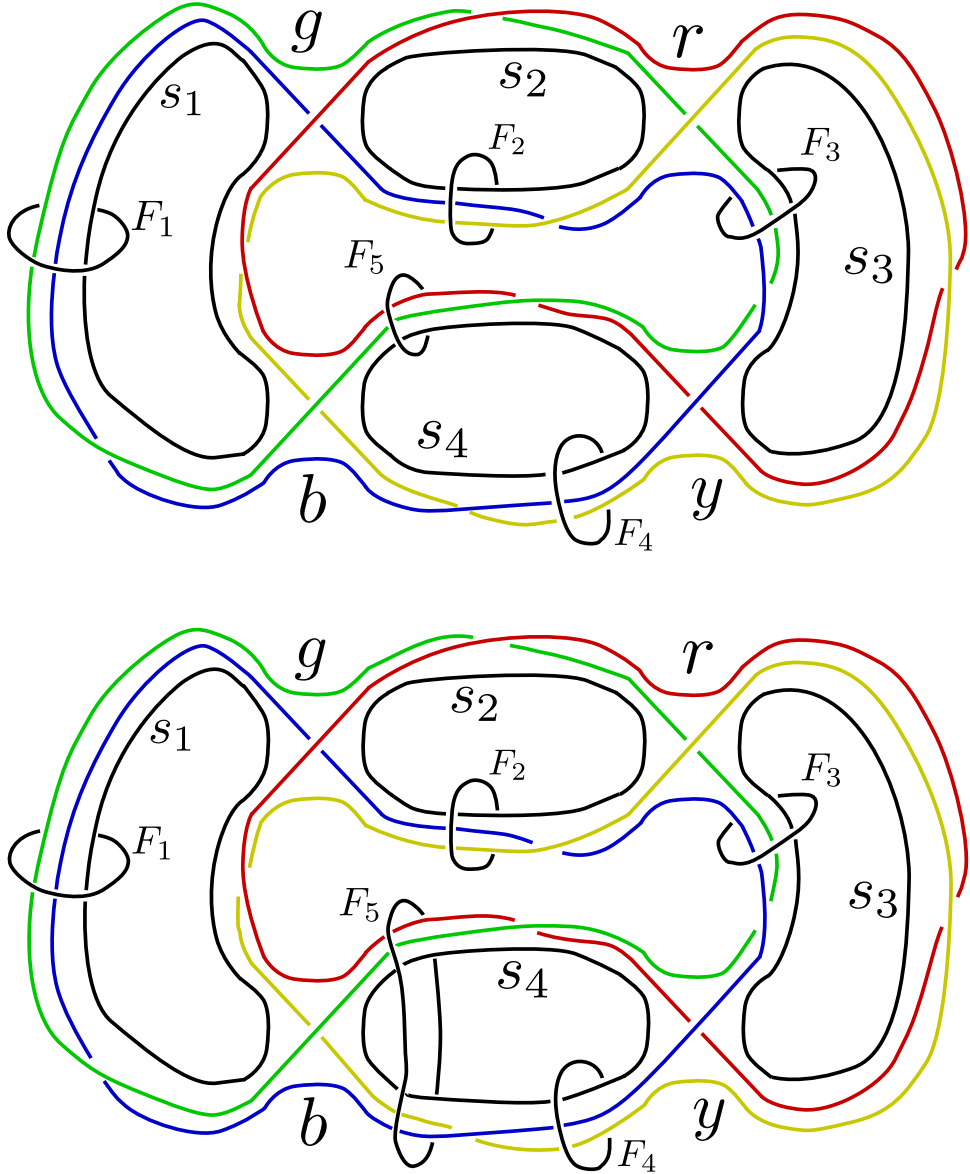}}
\caption{Presentations of the geometrically bounding manifold $M$ as a partially framed link.}\label{framedlink1}
\end{figure}

We wish to eliminate the framed components $F_i$ by applying blow-ups/downs and handle-slides. We begin by applying a handle-slide of $F_5$ over $F_4$, modifying $L$ to the link of Figure \ref{framedlink1} (bottom). For simplicity, we keep calling the $F_5$ new framed component. Notice that the component labeled $r$ is unlinked from all the framed components except $F_5$.

As a second step, we get rid of the framed components $F_i$, for $i=1,\dots,4$. We use the local move represented in Figure \ref{mossalocale}. 
This move can be viewed as a composition of two moves. The first is a twist on an unknotted non-framed component, which does not change the underlying manifold as shown in page $265$ of \cite{rolfsen}. The second is a topological blow-down.

\begin{figure}[ht!]
\centering
\includegraphics{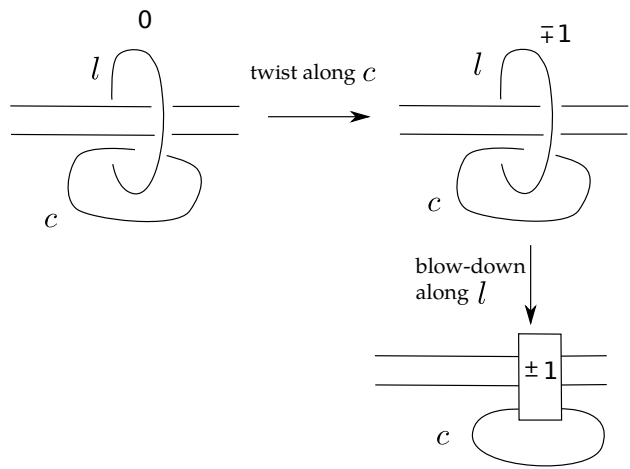}
\caption{Local twist along an unknotted zero-framed component $l$. This move is the result of a twist along an unknotted, non-framed component $c$, which changes to $\mp 1$ the framing on $l$, followed by a blow-down on $l$. The rectangle labeled by $\pm 1$ represents a full twist of the strands that cross it.}\label{mossalocale}
\end{figure}

We apply this local move within small balls that encircle the unknotted framed components $F_i$, $i=0,\dots,4$ and the unknotted non-framed components $s_i$, $i=1,\dots, 4$. The role of $l$ is played by the components labeled $F_i$, while the role of $c$ is played by the components labeled $s_i$. We choose the directions of the twist in such a way that that clockwise (resp.\ counterclockwise) half-twists of the coloured components become counterclockwise (resp.\ clockwise). In other words we choose the twists in order to minimize the linking number of the coloured components.

The result of these moves is shown in Figure \ref{framedlink3}. There is just one component, $F_5$, with zero framing, so what we have now is a presentation of the manifold $M$ as a link complement in $S^2\times S^1$. In fact the components labeled $y,r,g,b$ form a pure braid in a solid torus $D^2\times S^1 \subset S^2\times S^1$, with the loops $s_1,s_2,s_3,s_4$ encircling components labeled $y,r,b$.

\begin{figure}[ht!]
\centering
\includegraphics{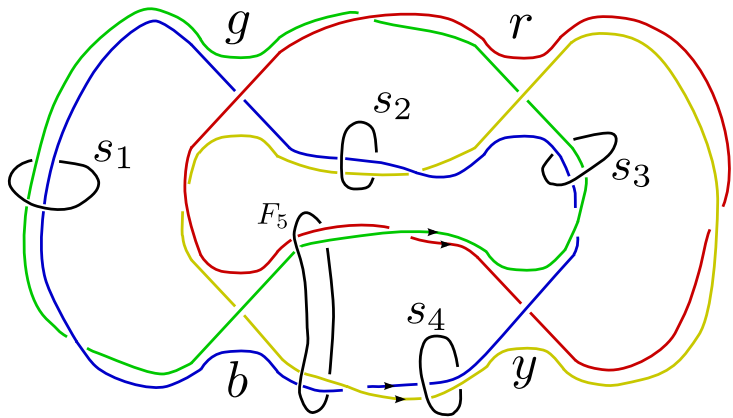}
\caption{Presentation of the geometrically bounding manifold $M$ as a link complement in $S^2\times S^1$.}\label{framedlink3}
\end{figure}

The braid formed by the components labeled $y,r,g,b$ is represented in Figure \ref{braid}. The braid labeled $r$ winds once around the sub-braid formed by the components $y,g,b$. As a consequence of this fact, we see that the framed link of Figure \ref{framedlink3} is isotopic to the framed link of Figure \ref{framedlink4}.

\begin{figure}[ht!]
\centering
\includegraphics{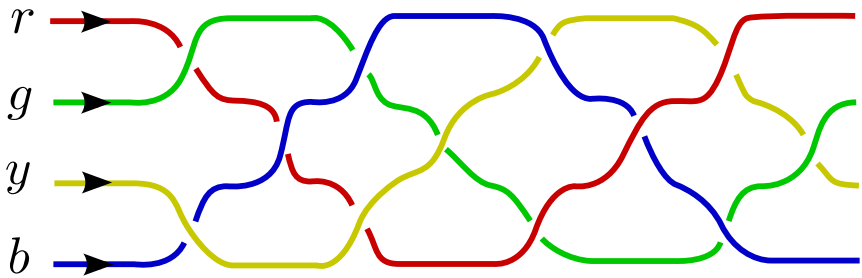}
\caption{Pure braid in $S^2\times S^1$ formed by the coloured components.}\label{braid}
\end{figure}

\begin{figure}[ht!]
\centering
\makebox[\textwidth][c]{
\includegraphics[width=\textwidth]{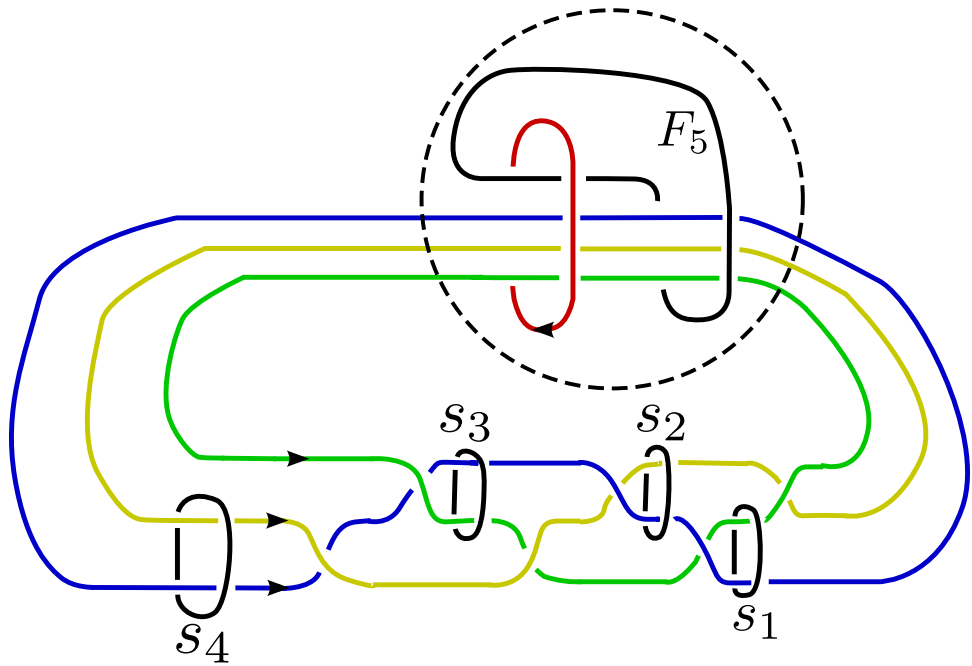}}
\caption{Modified presentation of the bounding manifold as link complement in $S^2\times S^1$.}\label{framedlink4}
\end{figure}

We can now proceed to remove the last framed component, therefore obtaining a presentation of the bounding manifold as the exterior of a link in $S^3$. We use a sequence of local moves, which all take place within the region highlighted by a dashed line in Figure \ref{framedlink4}. We begin by performing a clockwise twist on the component labeled $r$, which changes to $-1$ the framing on $F_5$, unlinking it from all the components except the $r$. A blow-down along $F_5$ allows us to remove this last framed component.
In practice, the removal of the framed component is achieved by a full clockwise twist of the green, blue and yellow components (see Figure \ref{locale}).

\begin{figure}
\centering
\makebox[\textwidth][c]{
\includegraphics[width=\textwidth]{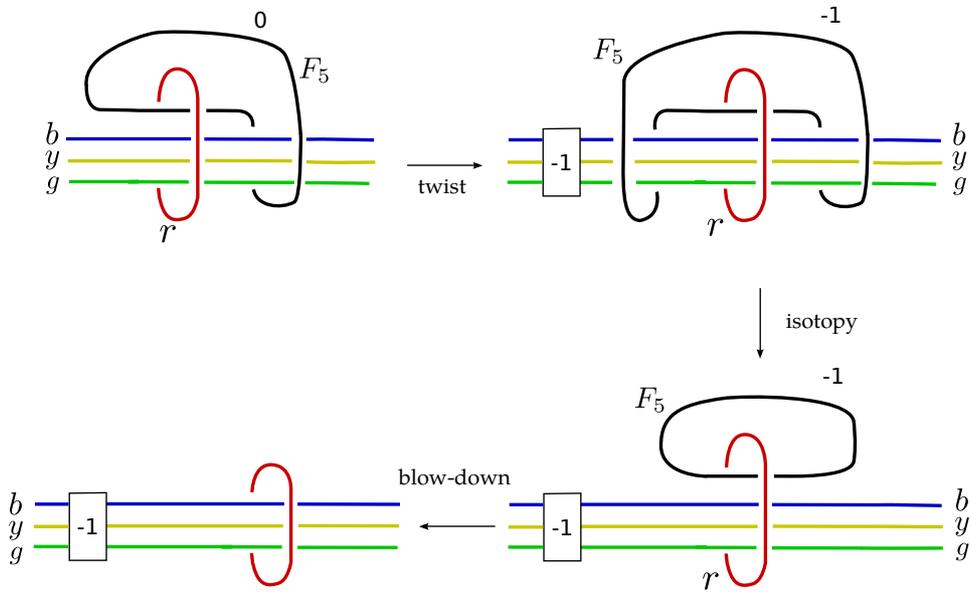}}
\caption{Kirby moves removing the last framed component.}\label{locale}
\end{figure}
The result of this move is shown in Figure \ref{finale}. 
\begin{figure}
\centering
\makebox[\textwidth][c]{
\includegraphics[width=0.8\textwidth]{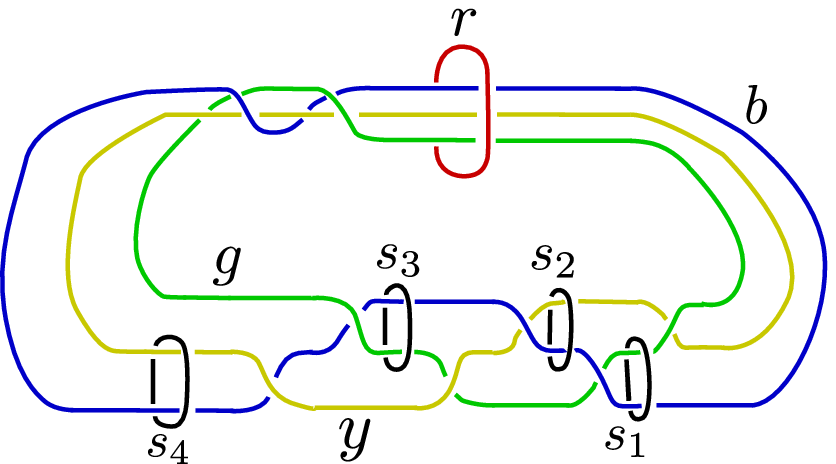}}
\caption{Presentation of the bounding manifold as link complement in $S^3$.}\label{finale}
\end{figure}
As a final step, we apply a counter-clockwise twist along the component labeled $r$. This moves removes the crossings to the left of the component labeled $r$, and simplifies the link complement to the one of Figure \ref{finale2}.

\begin{figure}[ht!]
\centering
\makebox[\textwidth][c]{
\includegraphics[width=\textwidth]{bounding.eps}}
\caption{Simplified presentation of the bounding manifold as link complement in $S^3$.}\label{finale2}
\end{figure}

\clearpage

\end{document}